\newtheorem{thm}{Theorem}[section]
\newtheorem{lem}[thm]{Lemma}
\theoremstyle{definition}
\theoremstyle{remark}
\newtheorem{rem}[thm]{Remark}
\numberwithin{equation}{section}
\newcommand{\R}{\mathbb R}
\newcommand{\T}{\mathbb T}
\newcommand{\Z}{\mathbb Z}
\newcommand{\eps}{\varepsilon}
\newcommand{\X}{\mathcal{X}}
\newcommand{\st}{{\rm s}}           
\newcommand{\un}{{\rm u}}           
\newcommand{\tM}{ \widetilde{M}}
\newcommand{\tz}{ \tilde{z}}
\newcommand{\tPhi}{\tilde{\Phi}}
\newcommand{\tGamma}{\tilde{\Gamma}}
\newcommand{\tsigma}{\tilde{\sigma}}
\newcommand{\bF}{\mathbf{F}}
\def\tLambda{ {\tilde \Lambda}}
\begin{document}
\title[]{Global effect of non-conservative perturbations on 
homoclinic orbits}
\author{Marian\ Gidea$^\dag$}
\address{Yeshiva University, Department of Mathematical Sciences, New York, NY 10016, USA }
\email{Marian.Gidea@yu.edu}
\thanks{$^\dag$ Research of M.G. was partially supported by NSF grant  DMS-1814543.}
\author{Rafael de la Llave${^\ddag}$}
\address{School of Mathematics, Georgia Institute of Technology, Atlanta, GA 30332, USA}
\email{rafael.delallave@math.gatech.edu}
\thanks{$^\ddag$ Research of R.L. was partially supported by NSF grant DMS-1800241,
 and H2020-MCA-RISE \#734577}
 \author{Maxwell Musser$^\dag$}
\address{Yeshiva University, Department of Mathematical Sciences, New York, NY 10016, USA }
\email{mmusser@mail.yu.edu}
\thanks{$^\dag$ Research of M.M. was partially supported by NSF grant  DMS-1814543.}

\subjclass[2010]{Primary,
37J40;  
37C29; 34C37; 
Secondary,
70H08. 
} \keywords{Melnikov method; homoclinic orbits; scattering map.}
\date{}

\dedicatory{To the memory of Florin Diacu.}

\begin{abstract}
We study the effect of time-dependent, non-conservative perturbations on the dynamics along homoclinic orbits to a normally hyperbolic invariant manifold. We assume that the unperturbed system is Hamiltonian, and the normally hyperbolic invariant manifold is parametrized via action-angle coordinates.
The homoclinic excursions can be described via the scattering map, which gives the future asymptotic of an orbit as a function of the past asymptotic. We provide explicit formulas, in terms of convergent integrals, for the perturbed scattering map expressed in action-angle coordinates. We illustrate these formulas in the case of a perturbed rotator-pendulum system.
\end{abstract}

\maketitle
\section{Introduction}

\subsection{Brief description of the  main results and   methodology}
In this paper we  study the effect of small, non-conservative, time-dependent perturbations on the dynamics along  homoclinic  orbits in Hamiltonian systems.  
We describe this dynamics via the scattering map, and estimate the effect of the perturbation on the scattering map.
We illustrate the computation of the perturbed scattering map on a simple model: the  rotator-pendulum system.
However, similar computations can be obtained for more general systems. 

Our approach is based on geometric methods and on Melnikov theory. The geometric framework assumes the following situation. There exists a normally hyperbolic invariant manifold (NHIM) whose stable and unstable manifolds coincide. The orbits in the intersection are homoclinic orbits which are bi-asymptotic to the normally hyperbolic invariant manifold.
To each homoclinic   intersection we can associate a scattering map.
By definition, the scattering map assigns to the  foot-point  of the unstable fiber passing through a given homoclinic point the foot-point   of the stable fiber  passing through the same homoclinic point. The scattering map is a diffeomorphism of an open subset of the NHIM onto its image.
If the  system is Hamiltonian and the NHIM is  a symplectic manifold then the scattering map is a symplectic map. If a small, Hamiltonian perturbation is added to the system, the scattering map remains symplectic, provided that the NHIM persists under the perturbation.
This is no longer the case when a non-conservative perturbation is added to the system: the perturbed scattering map -- provided that it survives the perturbation -- does not need to be symplectic.

In the rotator-pendulum model that we consider, the NHIM can be\break parametrized via action-angle coordinates, so the scattering map can be described in terms of these coordinates as well. In the unperturbed case, the  scattering map is  the identity. Then we add  a small, non-conservative, time-dependent perturbation. Using Melnikov theory, we  estimate the effect of the perturbation on the scattering map  to first order with respect to the size of the perturbation. We provide  expressions for  the difference between the perturbed scattering map and the unperturbed one, relative to the action and angle coordinates,
in terms of convergent improper integrals of the perturbation evaluated along  homoclinic  orbits of the unperturbed system. One important aspect in the computation is that, in the perturbed system, the action is a slow variable, while the angle is a fast variable.

Similar computations of the scattering map, in the case when the perturbation is Hamiltonian, have been done in, e.g. \cite{DelshamsLS08a}. The effect of the perturbation on the action component of the scattering map is relatively easy to compute directly. On the other hand, the effect on the angle component of the scattering map is more complicated to compute, since this is a fast variable. To circumvent this difficulty, the paper \cite{DelshamsLS08a} uses the symplecticity of  the scattering map to estimate indirectly the effect of the perturbation on the angle component. In our case, since we consider non-conservative perturbations, this type of argument no longer holds. We therefore perform a direct computation of the effect  of the perturbation on the angle component of the scattering map.

\subsection{Related works}

The Melnikov method has been developed to study the  persistence of
periodic orbits and of homoclinic/heteroclinic connections under periodic perturbations \cite{Melnikov63}. 

One well-known application of the Melnikov method is to show that degenerate homoclinic orbits in the unperturbed system yield transverse homoclinic orbits in the perturbed system, see, e.g.,  \cite{HolmesM82,guckenheimer1984nonlinear,Robinson88,Wiggins90,
DelshamsRamirezRos1996,DelshamsRamirezRos1997,DelshamsG00,DelshamsG00a}.
The effect of the homoclinic orbits is given in terms of certain improper integrals
referred to as `Melnikov integrals'. In some of these papers the integrals are only conditionally
convergent, and the sequence of limits of integration must
be carefully chosen in order to obtain the correct dynamic meaning. 

Another important application of the Melnikov method is to estimate the effect of the perturbations on the scattering map, which is associated to homoclinic excursions to a normally hyperbolic invariant manifold. In the case when the perturbation is given by a time-periodic or quasi-periodic Hamiltonian, this effect is estimated in, e.g.,  
\cite{DelshamsLS00,DelshamsLS06a,DelshamsLS06b,DelshamsLS08a,DLS_Multi,GideaLlaveSeara14,delshams2017arnold}. 
The effect on the scattering map of general time-dependent Hamiltonian perturbations is studied in, e.g.,  \cite{GideaL17,gidea2018global}.

Some other  papers of a related interest include
\cite{BaldomaFontich1998,LomeliMR08,LomeliMeiss2000,Roy2006,LomeliMR08,granados2012melnikov,
granados2014scattering,granados2017invariant}.

A novelty of our paper is that we study the effect on the scattering map of general time-dependent perturbations that can be non-conservative. The methodology used in some of the earlier papers, which relies on the symplectic properties of the scattering map, does not extend to the non-conservative case.

We also note that the results here are global in the sense that they apply to all homoclinics to a NHIM, while  
other results only apply to homoclinics to  fixed points or periodic/quasi-periodic orbits.

\subsection{Structure of the paper}
In Section \ref{section:setup} we provide the set-up for the problem, and describe the model that makes the main focus of the subsequent results -- the rotator-pendulum system subject to general time-dependent, non-conservative perturbations. In Section~\ref{sec:preliminaries}, we describe the main tools -- normally hyperbolic invariant manifolds and the scattering map. In Section~\ref{sec:master_lemmas} we provide some lemmas that are used in the subsequent calculations. The main results are formulated and proved in Section~\ref{sec:scattering_perturbed}. Theorem \ref{thm:transverse_homoclinic} gives sufficient conditions for the existence of transverse homoclinic intersections for the perturbed system. Theorem \ref{prop:change_in_I} provides estimates on the effect of the perturbation on the action-component of the scattering map. Theorem~\ref{prop:change_in_theta} provides estimates on the effect of the perturbation on the angle-component of the scattering map.
In Section~\ref{sec:comparison_similar} we show that, when the perturbation is Hamiltonian, the  formulas obtained in Theorem~\ref{prop:change_in_I} and  Theorem \ref{prop:change_in_theta} are equivalent to the corresponding formulas   in \cite{DelshamsLS08a}.

\section{Set-up}\label{section:setup}
Consider a $C^{r+1}$-smooth manifold  $M$ of dimension $(2m)$, where  $r\geq r_0$ for some suitable $r_0$.
Each point $z\in M$ is described via a system of local coordinates $(u,v)\in\R^{2m}$, i.e., $z=z(u,v)$.
Assume  that $M$ is  endowed with  the standard symplectic form
\begin{equation}\label{eqn:standard_symplectic}
\Upsilon=du\wedge dv=\sum_{i=1}^{m}du_i\wedge dv_i,\end{equation}
defined on local coordinate charts.

On $M$ we consider a non-autonomous system of differential equations
\begin{equation} \label{eqn:generalperturbation}
\dot z = \X_\eps(z;\eps)=\X^0(z) + \eps\X^1(z,t;\eps),
\end{equation}
where $\X^0:M\to TM$ is a $C^r$-differentiable vector field on $M$, $\X^1:M\times \R\times \R\to TM$ is a time-dependent, parameter dependent $C^r$-differentiable vector field on $M$,
and $\epsilon\in\R$ is a `smallness' parameter,  taking values in some  interval $(-\eps_0,\eps_0)$ around $0$.
Moreover, we assume that $\X^1=\X^1(z,t;\eps)$ is
uniformly differentiable in all variables.

The flow of \eqref{eqn:generalperturbation} will be denoted by $\Phi^t_\eps$.

Above, the  dependence of $\X^1(z,t;\eps)$ on the time $t$ is assumed to be of a general type, not necessarily periodic or quasi-periodic.
In the particular case of a periodic perturbation, we require that $t$ is defined mod 1, or, equivalently
$t\in \T^1$. In the particular case of a  quasi-periodic perturbation,  we require that the vector field $\X^1$ is
of the form $\X^1(z,\chi(t);\eps)$, for\break $\chi:\R\to\T^k$ of the form $\chi(t)=\phi_0+t\varpi$ for some $k\geq 2$, $\phi_0\in \T^k$ and
$\varpi\in \R^k$ a rationally independent vector, i.e., satisfying the following condition:   $h\in\Z^k$ and $h\cdot \varpi=0$ imply $h=0$.

Below, we will consider some situations when the  vector fields $\X^0$, $\X^1$ satisfy additional assumptions.

\subsection{The unperturbed system}\label{section:unperturbed} We assume that the vector field $\X^0$ represents an autonomous Hamiltonian vector field, that is, $\X^0=J\nabla_z H_0$ for some $C^{r+1}$-smooth Hamiltonian function $H_0:M\to \R$, where $J$ is an almost complex structure compatible with the standard symplectic form given by \eqref{eqn:standard_symplectic}, and the gradient $\nabla$ is with respect to the associated Riemannian metric\footnote{$g(u,v)=\omega(u,Jv)$.}.

Below we describe some of the geometric structures that are the subject of our study. These geometric structures are defined in Section \ref{sec:NHIM}.

\begin{itemize}
\item[\textbf{(H0-i)}] There exists a $(2d)$-dimensional manifold $\Lambda_0\simeq D\times\T^d\subseteq M$  that is a normally hyperbolic invariant manifold (NHIM) for the Hamiltonian flow $\Phi^t_0$ of $H_0$,
where $D$ is a closed $d$-dimensional ball $B^d$ in $\mathbb{R}^d$. 
\item[\textbf{(H0-ii)}] The manifold $\Lambda_0$ is parametrized
via action-angle coordinates, and is foliated by $d$-dimensional invariant tori, each torus corresponding to a fixed value of the action. The flow $\Phi^t_0$ on each such torus is a linear flow.

\item [\textbf{(H0-iii)}] The unstable and stable manifolds $W^\un(\Lambda_0)$, $W^\st(\Lambda_0)$    of $\Lambda_0$ coincide, i.e.,  $W^\un(\Lambda_0)=W^\st(\Lambda_0)$, and moreover, for each $z\in\Lambda_0$, $W^\st(z)=W^\un(z)$.
\end{itemize}

Condition \textbf{(H0-i)} says that there exists a NHIM for the flow. Condition \textbf{(H0-iii)} says that there exist  homoclinic orbits to the NHIM which are degenerate, as they   correspond  to the unstable and stable manifolds of the NHIM which coincide. We will show that if  the perturbation $\X^1$ satisfies some verifiable conditions, then  the unstable and stable manifolds of the perturbed NHIM intersect transversally for all $\eps\neq 0$ sufficiently small, so there exist transverse homoclinic orbits to the NHIM. The goal will be to quantify the effect of the perturbation  on the dynamics along  homoclinic orbits. This effect will be measured in terms of the changes in the action and angle coordinates when the orbit follows a homoclinic excursion. 
 
As a model for a system with the above properties, we consider the rotator-pendulum system, which is described
in detail in Section \ref{section:rotator_pendulum}.

\subsection{The perturbation.} The vector field  $\X^1$  is a time-dependent, parameter-dependent vector field on $M$. In the general case we will not assume that $\X^1$ is Hamiltonian, so the system \eqref{eqn:generalperturbation} can be subject to dissipation or forcing.

We will also derive results for the particular case when the perturbation $\X^1$  in \eqref{eqn:generalperturbation} is Hamiltonian, that is, it is given by
\begin{equation}\label{eqn:h}
   \X^1(z,t;\eps)=J\nabla_z H_1(z,t;\eps),
\end{equation}
where $H_1$ is a time-dependent, parameter-dependent $C^{r+1}$-smooth Hamiltonian function on $M$.

\subsection{Model: The rotator-pendulum system.}\label{section:rotator_pendulum}
This model is described by an autonomous Hamiltonian $H_0$ of the form:
\begin{equation}\label{eqn:rotator_pendulum}
\begin{split}
H_0(p,q,I,\theta,t)&=h_0(I)+h_1(p,q)\\&=h_0(I)+\sum_{i=1} ^{n}\pm\left (\frac{1}{2}p_i^2+V_i(q_i)\right),
\end{split}
\end{equation}
with $I=(I_1,\ldots,I_d)\in\mathbb{R}^d$, $\theta=(\theta_1,\ldots,\theta_d)\in \mathbb{T}^d$,
$p=(p_1,\ldots, p_n)\in \mathbb{R}^{n}$, $q=(q_1, \ldots, q_n)\in \mathbb{T}^{n}$, and $z=z(p,q,I,\theta)$.
In the above the sign $\pm$  means that for each $i$ there is some fixed choice of a sign $\pm$ in front of $\left (\frac{1}{2}p_i^2+V_i(q_i)\right)$.

The phase space $\R^{2(d+n)}$ is endowed with the symplectic form \[\Upsilon=\sum_{i=1}^{n}dp_i\wedge dq_i+\sum_{j=1}^{d}dI_j\wedge d\theta_j.\]


In the above, we assume the following:
\begin{itemize}
\item[(V-i)]
Each potential $V_i$ is periodic of period $1$ in~$q_i$;
\item[(V-ii)] Each potential $V_i$ has a non-degenerate local  maximum (in the sense of Morse),  which, without loss of generality,
we set at $0$; that is,   $V'_i(0)=0$ and  $V''_i(0)<0$. The non-degeneracy in the sense of Morse means  that, additionally,  $0$ is the only critical point in the level set $\{V_i(q) = V_i (0)\}$, that is,   $V'_i(q_*) = 0$ and $V_i(q_*) = V_i(0)$ implies $q_* = 0$.
\end{itemize}
Condition (V-ii) implies that each
pendulum   has a homoclinic orbit to $(0, 0)$.

We note that for the classical rotator, the standard assumption is that  $\partial^2 h_0/\partial I^2$ is positive definite; in our case  we allow that  $\partial^2 h_0/\partial I^2$ is of indefinite sign. For this reason we refer to $h_0$ as a `generalized' rotator. This situation appears in several applications, such as critical inclination of satellite orbits, quasigeostrophic flows, plasma devices, and transport in magnetized plasma   \cite{kyner1968rigorous,del1992hamiltonian,hazeltine2003plasma}.

For the classical pendulum, the Hamiltonian is of the form $\left (\frac{1}{2}p_i^2+V_i(q_i)\right)$; in our case we allow a sign $\pm 1$  in front each pendulum, so $\partial ^2h_1/\partial p^2$ can  be of indefinite sign. This is why we refer to the  terms in $h_1$ as  `generalized penduli'.

In Section \ref{section:NHIM_ unperturbed rotator_pendulum} we will show that for each closed $d$-dimensional ball $D\subseteq \R^d$, the set
\begin{equation}
\label{eqn:pendulm_NHIM_0}
\Lambda_0=\{(p,q,I,\theta)\,|\, I\in D,\, p=q=0\}
\end{equation}
is a NHIM with boundary.
The stable and unstable manifolds coincide, i.e.,  $W^\un(\Lambda_0)=W^\st(\Lambda_0)$,
and, moreover, for each $z\in\Lambda_0$,  $W^\un(z)=W^\st(z)$.
Each point in $W^\un(\Lambda_0)=W^\st(\Lambda_0)$ determines a  homoclinic trajectory which approaches $\Lambda_0$ in both positive and negative time.

We note that the geometric structures described above satisfy the properties
\textbf{(H0-i)}, \textbf{(H0-ii)}, \textbf{(H0-iii)}
in Section \ref{section:unperturbed}.

\section{Preliminaries}\label{sec:preliminaries}

\subsection{Vector fields as differential operators}\label{section:notation}
In the sequel,  we will identify vector fields  with
differential operators, which is a standard operation in differential geometry (see, e.g., \cite{BurnsG05}).
That is,  given a smooth vector field $\X$ and a smooth function $f$ on the manifold $M$,
\begin{equation}
\label{eqn:vf_derivative}(\X  f)(z) = \sum_j (\X)_j (z) (\partial_{z_j} f)(z),
\end{equation}
where $z_j$, $j\in\{1,\ldots,\dim(M)\}$, are local coordinates.
Similarly, a smooth time- and parameter-dependent vector field acts as a differential operator
by
\begin{equation}
\label{eqn:vf_derivative_time}(\X  f)(z,t;\eps) = \sum_j (\X)_j (z,t;\eps) (\partial_{z_j} f)(z).
\end{equation}

If $\Phi^t$ is the flow for the vector field $\X$, then
\begin{equation*}\begin{split}\frac{d}{dt}(f(\Phi^t(z)))&=\nabla f(\Phi^t(z))\cdot \frac{d}{dt}(\Phi^t(z))=\nabla f(\Phi^t(z))\cdot\X(\Phi^t(z))\\
&=\sum_{j}(\X_j)(\Phi^t(z))(\partial_{z_j} f)(\Phi^t(z))=(\X f)(\Phi^t(z)).
\end{split}\end{equation*}

For a vector-valued function $\bF:M\to \R^k$   of components $\bF=(\bF_i)_i$, we will denote
\begin{equation*}
\X\bF:=(\X\bF_i)_i.
\end{equation*}

\subsection{Extended system}\label{sec:extended}
To \eqref{eqn:generalperturbation} we associate the extended system
\begin{equation}\label{eqn:generalperturbation_t}
\begin{split}
& \dot z= \X^0(z) + \eps\X^1(z,t;\eps), \\
& \dot t  = 1,\\
\end{split}
\end{equation}which is defined on the extended phase space $\tM=M\times \R$. We denote $\tz=(z,t)\in \tM$.
The independent variable will be denoted by $s$ from now on, and the derivative above is meant with respect to $s$. We will denote by $\tPhi^s_\eps$ the extended flow of \eqref{eqn:generalperturbation_t}. We have
\[ \tPhi^s_\eps(z,t)=( {\Phi}^s_\eps(z), t+s).\]


\subsection{Normally hyperbolic invariant manifolds}\label{sec:NHIM}
We briefly recall the notion of a normally hyperbolic invariant manifold \cite{Fenichel74,HirschPS77}.

Let $M$ be a $C^r$-smooth manifold, $\Phi^t$ a $C^r$-flow on $M$. A submanifold  (with or without boundary) $\Lambda$ of $M$ is a  normally hyperbolic invariant manifold (NHIM) for $\Phi^t$ if  it is invariant under $\Phi^t$, and there exists a splitting of the tangent bundle of $TM$ into sub-bundles over $\Lambda$
\begin{equation}
\label{eqn:NHIM_splitting}
T_z M=E^{\un}_z \oplus E^{\st}_z \oplus T_z \Lambda, \quad \forall z \in \Lambda
\end{equation}
that are invariant under $D\Phi^t$ for all $t\in\mathbb{R}$, and there exist  rates
\[\lambda_-\le \lambda_+<\lambda_c<0<\mu_c<\mu_-\le \mu_+\]
and a constant ${C}>0$, such that for all $x\in\Lambda$ we have
\begin{equation}
\label{eqn:NHIM_rates}
\begin{split} {C}e^{t\lambda_- }\|v\| \leq \|D\Phi^t(z)(v)\|\leq  {C}e^{t\lambda_+}\|v\|  \textrm{ for all } t\geq 0, &\textrm{ if and only if } v\in E^{\st}_z,\\
{C}e^{t\mu_+ }\|v\|\leq \|D\Phi^t(z)(v)\|\leq  {C}e^{t\mu_- }\|v\|  \textrm{ for all } t\leq 0,  &\textrm{ if and only if }v\in E^{\un}_z,\\
{C}e^{|t|\lambda_c }\|v\|\leq  \|D\Phi^t(z)(v)\|\leq  {C}e^{|t|\mu_c}\|v\| \textrm{ for all } t\in\mathbb{R}, &\textrm{ if and only if }v\in T_z\Lambda.
\end{split}
\end{equation}

It is known that  $\Lambda$ is $C^{\ell}$-differentiable, with $\ell\leq r-1$,   provided that
\begin{equation}\label{eqn:ratesdifferentiable}
\begin{split}
& \ell {\mu}_c  + {\lambda}_+ < 0, \\
& \ell {\lambda}_c +  {\mu}_- > 0.
\end{split}
\end{equation}

The manifold  $\Lambda$ has associated  unstable and stable manifolds of $\Lambda$,
denoted $W^{\un}(\Lambda)$ and $W^{\st}(\Lambda)$, respectively,
which are $C^{\ell-1}$-differentiable.
They are foliated by $1$-dimensional unstable and stable manifolds (fibers) of points,
$W^{\un}(z)$, $W^{\st}(z)$, $z\in\Lambda$,  respectively, which are as smooth as the flow,
i.e., $C^r$-differentiable.
These fibers are not invariant by the flow, but \emph{equivariant} in the sense that
\begin{equation*}\begin{split}
\Phi^t(W^\un(z))&=W^\un(\Phi^t(z)),\\
\Phi^t(W^\st(z))&=W^\st(\Phi^t(z)).
\end{split}\end{equation*}

The  unstable and stable manifolds of $\Lambda$,
$W^\un(\Lambda)$ and $W^\st(\Lambda)$, are tangent to
\[E^\un_{\Lambda}=\bigcup_{z\in\Lambda}E^\un_z,  \textrm { and }
E^\st_{\Lambda}=\bigcup_{z\in\Lambda}E^\st_z,\]
respectively.

Since $W^{\st,\un}(\Lambda)=\bigcup_{z\in\Lambda} W^{s,u}(z)$, we can define the projections along the fibers
\begin{equation}\label{eqn:projections}\begin{split}
\Omega^{+}:W^{\st}(\Lambda)\to\Lambda,\quad &\Omega^+(z)=z^+\textrm{ iff }
z \in W^{\st}(z^{+}),\\
\Omega^{-}:W^{\un}(\Lambda)\to\Lambda,\quad &\Omega^-(z)=z^-\textrm{ iff }z \in W^{\un}(z^{-}).\end{split}\end{equation}

The  point
$z^+\in \Lambda$ is characterized by
\begin{equation}\label{eqn:convergence_s}
d( {\Phi}^t(z), {\Phi}^t(z^+) ) \le
C_z e^{t \lambda_+}, \quad \textrm { for all } t \ge
0.
\end{equation}
and the point $z^- \in \Lambda$ by
\begin{equation}\label{eqn:convergence_u}
d(  {\Phi}^t(z), {\Phi}^t(z^-) \le
  C_z e^{t \mu_-}, \quad \textrm { for all } t \leq 0,
\end{equation}
for some $C_z>0$.

\subsection{Scattering map}\label{section:scattering_review}

Assume that $W^\un(\Lambda)$, $W^\st(\Lambda)$ have a   transverse intersection along a manifold $\Gamma$ satisfying:
\begin{equation}\label{goodtransversal}
\begin{split}
&T_z\Gamma = T_xW^\st(\Lambda)\cap
T_xW^\un(\Lambda), \textrm { for all } z\in\Gamma,\\
&T_xM=T_z\Gamma  \oplus
T_xW^\un(z^-)\oplus  T_xW^\st(z^+), \textrm { for all } z\in\Gamma.
\end{split}
\end{equation}

Under these conditions the projection mappings  $\Omega^\pm$ restricted to $\Gamma$ are local diffeomorphisms. We can restrict $\Gamma$ if necessary so that $\Omega^\pm$ are diffeomorphisms from $\Gamma$ onto open subsets $U^\pm$ in  $\Lambda$. Such a $\Gamma$ will be called a \emph{homoclinic channel}. 

By definition the scattering map associated to $\Gamma$ is defined as
\[\sigma : U^- \subseteq \Lambda \to U^+ \subseteq
\Lambda,\quad \sigma = \Omega^+ \circ (\Omega^{-})^{-1}.
\]

Equivalently, $\sigma(z^-) = z^+$, provided that
$W^\un(z^-)$ intersects $W^\st(z^+)$ at a unique point
$z\in\Gamma$.

If $M$ is a  symplectic manifold, $\Phi^t$ is a Hamiltonian flow on $M$, and $\Lambda\subseteq M$ has an induced symplectic structure,
then the scattering map is symplectic. If the flow is exact Hamiltonian,
the scattering map is exact symplectic. For details see \cite{DelshamsLS08a}.

\subsection{Normally hyperbolic invariant manifold for the unperturbed  rotator-pendulum system}
\label{section:NHIM_ unperturbed rotator_pendulum}

Consider the unperturbed rotator-pendulum system described in Section \ref{section:rotator_pendulum}.

The point $(0,0)$ is a hyperbolic fixed point for each pendulum,  the  characteristic
exponents are $\lambda^\un_i=(-V_i''(0))^{1/2}$, $\lambda^\st_i=-(-V_i''(0))^{1/2}$,
 and the corresponding unstable/stable eigenspaces are $E^\un_i=\textrm{Span}(v^u_i)$,
 $E^\st_i=\textrm{Span}(v^s_i)$, where
$v^u_i=(-(-V_i''(0))^{1/2},1)$, $v^s_i=((-V_i''(0))^{1/2},1)$, for $i=1, \ldots,n$.

Define \begin{equation}\label{eqn:def_lambdas}\begin{split} \lambda_- =&-\max_i \lambda_i,\, \lambda_+ =-\min_i \lambda_i,\\ \mu_- =& \min_i \lambda_i,\, \mu_+ =\max_i \lambda_i,\\ \lambda_c=&-\mu_c,\end{split}\end{equation}
where $\mu_c>0$ is some arbitrarily small positive number.

Also, define
\begin{equation}\label{eqn:def_Eus} \begin{split} E^\un_z=& \oplus_{i=1,\ldots,n} E^\un_i,\\
E^\st_z=& \oplus_{i=1,\ldots,n} E^\st_i.\end{split} \end{equation}

It immediately follows that for each closed $d$-dimensional ball $D\subseteq \R^d$, the set
\begin{equation}
\label{eqn:pendulm_NHIM}
\Lambda_0=\{(p,q,I,\theta)\,|\, I\in D,\, p=q=0\}
\end{equation}
is a NHIM with boundary,
where the rates
$\lambda_-$, $\lambda_+$, $\mu_-$, $\mu_+$,
$\lambda_c$, and $\mu_c$ from Section \ref{sec:NHIM} are
the ones defined by \eqref{eqn:def_lambdas}, and
the unstable and stable spaces $E^\un_z$ and
$E^\st_z$ at $z\in\Lambda_0$  are the ones given by \eqref{eqn:def_Eus}.

\subsection{Coordinate system for the unperturbed  rotator-pendulum system}\label{sec:coordinates-rot-pend}
The pendulum-rotator system is initially given in the coordinates $(p,q,I,\theta)$, and
the NHIM $\Lambda_0$ for this system  is described in the action-angle coordinates $(I,\theta)$.
Let  $\mathscr{N}$ be a neighborhood  of  $W^\un(\Lambda_0)=W^\st(\Lambda_0)$.

We define a new  system of symplectic coordinates\footnote{Symplectic coordinates are coordinates obtained via a change of variables that is a symplectic mapping.}
$(y,x,I,\theta)$ in a neighborhood  $\mathscr{N}'\subseteq \mathscr{N}$ of a disk $\mathscr{D}\subseteq W^\un(\Lambda_0)\setminus\{(0,0)\}=W^\st(\Lambda_0)\setminus\{(0,0)\}$, via the following properties:
\begin{itemize}
\item The coordinates $(I,\theta)$ are the action-angle coordinates for the rotator;
\item $dp\wedge dq=dy \wedge dx $;
\item $z\in\Lambda_0$ if and only if $x(z)=y(z)=0$;
\item $z\in W^\un(\Lambda_0)=W^\st(\Lambda_0)$ if and only if $y(z)=0$;
\item for $z\in\mathscr{N}'$, we have that $y_i=\pm(p_i^2/2+V_i(q_i))$, for $i=1,\ldots,n$.
\end{itemize}

\begin{figure}
\includegraphics[width=0.6\textwidth]{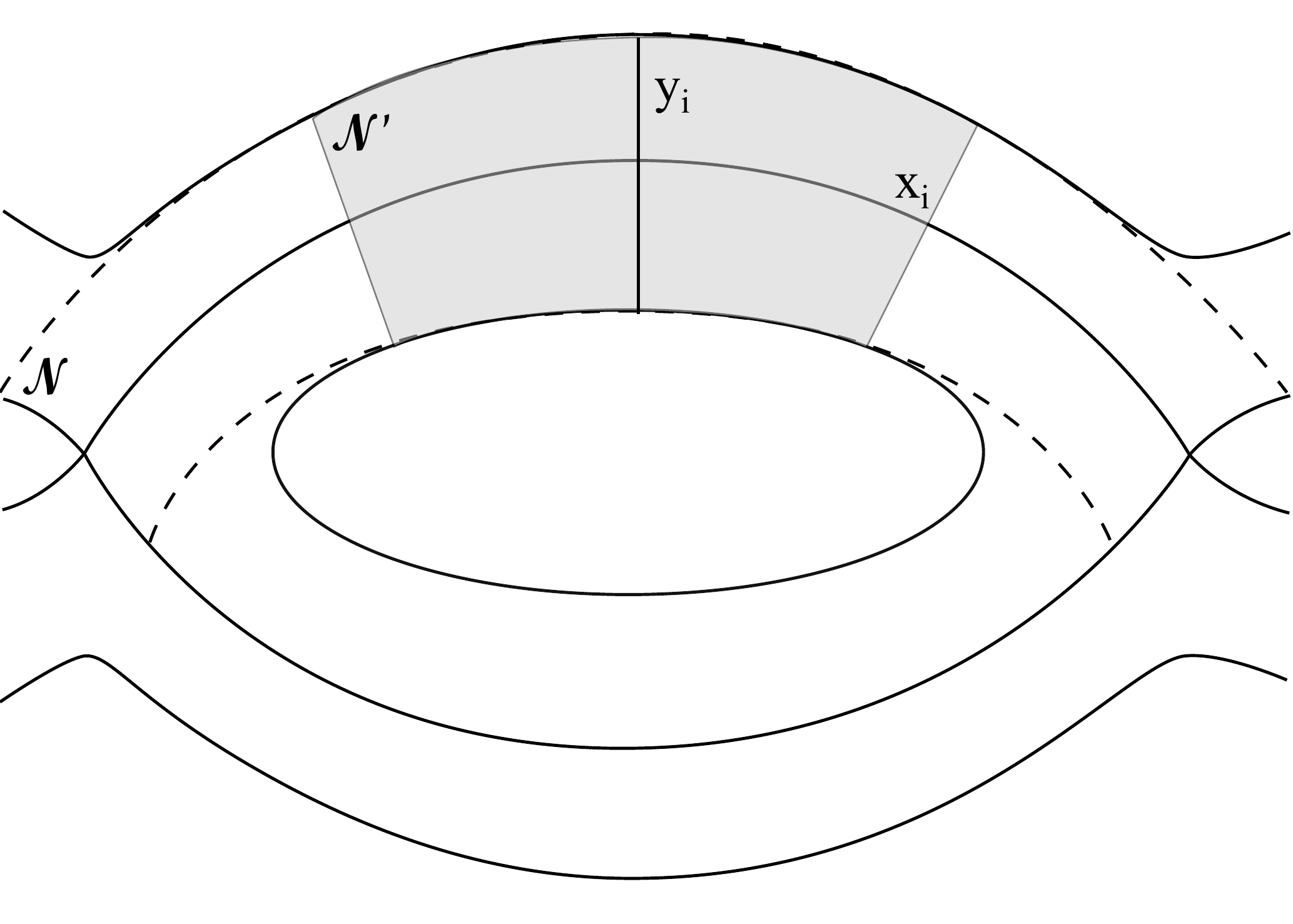}
\caption{Symplectic coordinate system $(y,x)$ in a neighborhood of a segment of the separatrix.}
\label{fig:pendulum_coordinates}
\end{figure}

See Fig.~\ref{fig:pendulum_coordinates}. The coordinate $y_i$ can be chosen to be equal to the energy $\pm(p_i^2/2+V_i(q_i))$ in a whole neighborhood  of the separatrix of the $i$-th generalized pendulum. 

Once we have that $y_i$ is the energy of the $i$-th generalized pendulum, 
the coordinate $x_i$ is determined so that is the symplectic conjugate of $y_i$.

The coordinate $x_i$ is given by $dx_i=\frac{ds_i}{\|\nabla y_i\|}$, where $ds_i=(dp_i^2+dq_i^2)^{1/2}=(p'_i(t)^2+q'_i(t)^2)^{1/2}dt$ is the arc length element along the energy level. 
Since $(p'_i,q'_i)=(-V'_i(q_i),p_i)$ we have $\|(p'_i,q'_i)\|=\|\nabla y_i\|$, therefore  $dx_i=dt$. That is, the coordinate $x_i$ equals to the time $t$ it takes the solution $(p_i(t),q_i(t))$ to go from some initial point $(p_{i}^{0},q_{i}^{0})$ to $(p_i,q_i)$.
The value $q_{i}^{0}$ can be chosen uniformly for all energy levels, and $p_{i}^{0}$ is implicitly given by the energy condition.

A direct computation shows that
\[ dx_i=\frac{ds}{\|y_i\|}=\frac{-V'_i(q_i)dp_i+p_idq_i}{p_i^2+V'_i(q_i)^2},\]
hence
\[dy_i\wedge dx_i=(p_idp_i+V'_i(q_i)dq_i)\wedge \left(\frac{-V'_i(q)dp_i+p_idq_i}{p_i^2+V'_i(q_i)^2}\right)=dp_i\wedge dq_i.\]

Note that we cannot extend  $(y_i,x_i)$ as a symplectic coordinate system to a neighborhood of the separatrix that contains the equilibrium point of the generalized pendulum, since this is a critical point of the energy function.

In the new coordinates $(y,x,I,\theta)$ the Hamiltonian $H_0$ is given by
\begin{equation}\label{eqn:H0_x_y_coords}
H_0(y,x,I,\theta)=h_0(I)+h_1(y)=h_0(I)+y,\textrm { for }  (y,x,I,\theta)\in\mathcal{N}'.
\end{equation}

The coordinate system described above is essentially the same as in\break \cite{gidea2018global}, except that here we additionally emphasize that it is symplectic.

\subsection{The scattering map for the unperturbed  extended pendulum-rotator system}
\label{sec:scattering_rot_pend}

We consider the extended  system from Section \ref{sec:extended}, and we express the scattering map for the  unperturbed  extended pendulum-rotator system in terms of the action-angle coordinates defined in Section~\ref{sec:coordinates-rot-pend}.

Since we have $W^\st(\tilde{\Lambda}_0)=W^\un(\tilde{\Lambda}_0)$ and for each $\tz\in\tilde{\Lambda}_0$, $W^\st(\tz)=W^\un(\tz)$, the corresponding scattering map $\tilde{\sigma}_0$ is the identity map wherever it is defined. Thus, $\tilde{\sigma}_0(\tz^-)=\tz^+$ implies $\tz^-=\tz^+$, or, equivalently
\begin{equation}\label{eqn:sigma0i}\tilde{\sigma}_0(I,\theta,t)=(I,\theta,t).\end{equation}

\subsection{Evolution equations}\label{sec:evolution}
Consider  the coordinate system  $(y,x,I,\theta)$ defined in Section~\ref{sec:coordinates-rot-pend}.
We will identify the vector fields $\X^0$ and $\X^1$ with derivative operators acting on functions, as described in Section \ref{section:notation}.

Since $\X^0=J\nabla H_0$ is a Hamiltonian vector field, using the Poisson bracket $[\cdot,\cdot]$, we have
\begin{equation*}\begin{split}
\X^0y     = &[y,H_0]=[y,h_0(I)+h_1(y,x)]=-\frac{\partial h_1}{\partial x},\\
\X^0x     = &[x,H_0]=[x,h_0(I)+h_1(y,x)]=\frac{\partial h_1}{\partial y},\\
\X^0I     = &[I,H_0]=[I,h_0(I)+h_1(y,x)]=-\frac{\partial h_0}{\partial \theta}=0,\\
\X^0\theta= &[\theta,H_0]=[\theta,h_0(I)+h_1(y,x)]=\frac{\partial h_0}{\partial I}=\omega(I).
\end{split}\end{equation*}

When $\X^1=J\nabla H_1$ is a Hamiltonian vector field, similarly we have
\begin{equation*}\begin{split}
\X^1y          = &[y,H_1] =  -\frac{\partial H_1}{\partial x},\\
\X^1x          = &[x,H_1] =   \frac{\partial H_1}{\partial y},\\
\X^1I          = &[I,H_1] =  -\frac{\partial H_1}{\partial \theta},\\
\X^1\theta     = &   [\theta,H_1]=\frac{\partial H_1}{\partial I}.
\end{split}\end{equation*}

Using the above formulas,  we provide below the evolution equations of the  coordinates $(y,x,I,\theta)$, expressing the time-derivative of each coordinate along a solution of the perturbed system.
We include the expression for the general case, as well as for the special case when the perturbation $\X^1$ is Hamiltonian.
\begin{equation}\label{eqn:evolution2}
\begin{split}
\dot y &=   \X^0y + \eps\X^1y  = -\frac{\partial{H_0}}{\partial x} + \eps\X^1y\\&=
-\frac{\partial{h_1}}{\partial x}-\eps\frac{\partial{H_1}}{\partial x}.
\end{split}
\end{equation}
\begin{equation}\label{eqn:evolution1}
\begin{split}
\dot x &=   \X^0x + \eps\X^1x  =\frac{\partial{H_0}}{\partial y} + \eps\X^1x\\&
=\frac{\partial{h_1}}{\partial y}+\eps\frac{\partial{H_1}}{\partial y}.
\end{split}
\end{equation}

\begin{equation}\label{eqn:evolution3}
\begin{split}
\dot I &= \X^0I+\eps \X^1I =  -\frac{\partial{H_0}}{\partial \theta} + \eps\X^1I\\&=
-\eps \frac{\partial{H_1}}{\partial \theta}.
\end{split}
\end{equation}
\begin{equation}
\label{eqn:evolution4}
\begin{split}\dot \theta &=  \X^0\theta+\eps \X^1 \theta =  \frac{\partial{H_0}}{\partial I} + \eps\X^1\theta \\&=
 \frac{\partial{h_0}}{\partial I}+\eps \frac{\partial{H_1}}{\partial I}.
\end{split}
\end{equation}

Note that the evolution equations for the $x$- and $y$-coordinate from above are only valid for $(y,x,I,\theta)\in\mathcal{N}'$ from Section~\ref{sec:coordinates-rot-pend}.

\subsection{Perturbed normally hyperbolic invariant manifolds}\label{section:NHIM_perturbed}
Since $\Lambda_0$ is a NHIM for the flow $\Phi^t_0$ of $\X^0$,
$\tLambda_0=\Lambda_0\times \R$ is a NHIM for the flow $\tPhi^s_0$ of the extended system \eqref{eqn:generalperturbation_t}.

Recall that  $\X^1=\X^1(z,t,\eps)$ is assumed to be
uniformly differentiable in all variables.  The theory of normally hyperbolic invariant manifolds,
\cite{Fenichel71,HirschPS77,Pesin04} asserts that there exists $\eps_0$ such that the manifold $\tLambda_0$
persists as a  normally hyperbolic manifold $\tLambda_\eps$, for all $|\eps| < \eps_0$, which is locally invariant under the flow $\tPhi^t_\eps$.
The persistent NHIM $\tLambda_\eps$ is $O(\eps)$-close in the $C^\ell$-topology to $\tLambda_0$, where $\ell$ is as in \eqref{eqn:ratesdifferentiable}.  The locally invariant manifolds are in fact invariant manifolds for an extended system, and they depend on the extension. Hence,  they do not need to be unique.

The manifold $\tLambda_\eps$ can be parametrized
via a $C^{\ell}$-diffeomorphism $\tilde{k}_\eps:\tLambda_0\to\tLambda_\eps$, where $\tilde{k}_0=\textrm{Id}_{\tLambda_0}$, and $\tilde{k}_\eps$  is $O(\eps)$-close to $\tilde{k}_0$ in the $C^\ell$-smooth topology on compact sets. Through $\tilde{k}_\eps$, the perturbed NHIM $\tLambda_\eps$ can be parametrized in terms of the variables $(I,\theta,t)$, where $(I,\theta)$ are the action-angle variables on $\Lambda_0$.

For details, see \cite{DelshamsLS06a}.

For the  perturbed NHIM $\tLambda_\eps$, $|\eps| < \eps_0$, there exists an invariant splitting of the tangent bundle similar  to that in \eqref{eqn:NHIM_splitting}, and $D\tPhi^t_\eps$ satisfies expansion/contraction relations similar to those in
\eqref{eqn:NHIM_rates}, for some constants $\tilde C$, $\tilde\lambda_-$, $\tilde\lambda_+$, $\tilde\mu_-$, $\tilde\mu_+$, $\tilde\lambda_c$, $\tilde\mu_c$.
These constants are independent of $\eps$, and can be chosen as close as desired to the unperturbed ones, that is, to
$C$, $\lambda_-$, $\lambda_+$,  $\mu_-$, $\mu_+$, $\lambda_c$, $\mu_c$, respectively,  by choosing $\eps_0$ suitably small.

There exist unstable and stable manifolds $W^\un(\tLambda_\eps)$, $W^\st(\tLambda_\eps)$ associated to $\tLambda_\eps$,
and there exist corresponding projection maps $\Omega^-:W^\un(\tLambda_\eps)\to\tLambda_\eps$, and $\Omega^+: W^\st(\tLambda_\eps)\to\tLambda_\eps$.
For $\tilde{z}_+=\Omega^+(\tilde{z})$, with $\tilde{z}\in W^\st(\tLambda_\eps)$ we have
\begin{equation}\label{eqn:convergence_s}
d( {\Phi}^t(\tilde{z}), {\Phi}^t(z^+) ) \le
 C_{\tilde{z}} e^{t \tilde{\lambda}_+}, \quad \textrm { for all } t \ge
0.
\end{equation}
and for $\tilde{z}_-=\Omega^-(\tilde{z})$, with $\tilde{z}\in W^\un(\tLambda_\eps)$ we have
\begin{equation}\label{eqn:convergence_u}
d(  {\Phi}^t(\tilde{z}), {\Phi}^t(\tilde{z}^-) \le
  C_{\tilde{z}} e^{t \tilde{\mu}_-}, \quad \textrm { for all } t \leq 0,
\end{equation}
for some $\tilde{C}_{\tilde{z}}>0$. The constant $\tilde{C}_{\tilde{z}}$ can be chosen uniformly bounded provided we restricted to $\tilde{z}$ in the local unstable and stable manifolds  $W^\un_{\rm loc}(\tLambda_\eps)$, $W^\st_{\rm loc}(\tLambda_\eps)$. Hence we can replace  $\tilde{C}_{\tilde{z}}$ by some $\tilde{C}$.

To simplify notation, from now on we will drop the symbol $\tilde{}$ from $\tilde C$, $\tilde\lambda_-$, $\tilde\lambda_+$,  $\tilde\mu_-$, $\tilde\mu_+$, $\tilde\lambda_c$ , $\tilde\mu_c$.

\section{Master lemmas}\label{sec:master_lemmas}

In this section we define some abstract Melnikov-type integral operators and study their properties, which will be used in the next sections. The derivations are similar to the ones in \cite{gidea2018global}. 

From Section \ref{section:NHIM_perturbed}, there exists $\eps_0>0$ such that, for each $\eps\in(-\eps_0,\eps_0)$, there exists a normally hyperbolic invariant manifold $\tLambda_\eps$ for $\tPhi^s_\eps$.

Assume that for each $\eps\in(-\eps_0,\eps_0)$ there exists  a homoclinic channel  $\tGamma_\eps$ (see Section \ref{section:scattering_review}), which depends $C^{\ell}$-smoothly on $\eps$, and determines the projections $\Omega^\pm:\tGamma_\eps\to \Omega^\pm(\tGamma_\eps)\subseteq \tLambda_\eps$, which are local diffeomorphisms as in \eqref{eqn:projections}.
We are thinking of $\tPhi^s_\eps$, $\tLambda_\eps$, $\tGamma_\eps$  as perturbations of
$\tPhi^t_0$, $\tLambda_0$, $\tGamma_0$, for $\eps\neq 0$ small.

Let $\tz_\eps\in\tGamma_\eps$ be a homoclinic point for $\tPhi^s_\eps$.
Because of the smooth dependence
of the normally hyperbolic manifold  and of its stable and unstable
manifolds on the perturbation,  there is a homoclinic point
$\tz_0\in\tGamma_0$   for $\tPhi^s_0$  that is $O(\eps)$-close to $\tz_\eps$, that is
\begin{equation}\label{eqn:zeps_z0}
\tz_\eps=\tz_0+O(\eps).
\end{equation}

Let $(\tz_\eps,\eps)\in\widetilde{M}\mapsto {\bf F}(\tz_\eps,\eps)\in\mathbb{R}^k$ be a uniformly  $C^{r_0}$-smooth  mapping on  $\widetilde{M}\times\R$, with $1\leq r_0\leq r$.

We define the integral operators
\begin{equation}\label{eqn:master_operators}
\begin{split}
\mathfrak{I}^+(\bF,\Phi^s_\eps, \tz_\eps)=&\int_{0}^{+\infty} \left( \mathbf{F}(\Phi^s_\eps(\tz_\eps^+))-\mathbf{F}(\Phi^s_\eps(\tz_\eps))\right)ds,\\
\mathfrak{I}^-(\bF,\Phi^s_\eps,\tz_\eps)=&\int_{-\infty}^{0} \left( \mathbf{F}(\Phi^s_\eps(\tz^-_\eps))-\mathbf{F}(\Phi^s_\eps(\tz_\eps)\right)ds.
\end{split}\end{equation}

\begin{lem}[Master Lemma 1]\label{lem:master_1}
The  improper integrals \eqref{eqn:master_operators} are convergent.
The operators $\mathfrak{I}^+(\bF,\Phi^s_\eps, z_\eps)$ and $\mathfrak{I}^-(\bF,\Phi^s_\eps, z_\eps)$
are linear in $\bF$.
\end{lem}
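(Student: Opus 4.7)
The plan is to bound each integrand in \eqref{eqn:master_operators} by a decaying exponential, by combining the hyperbolic rate estimates for $\tLambda_\eps$ from Section~\ref{section:NHIM_perturbed} with uniform Lipschitz continuity of $\bF$. Since $\tz_\eps$ is a homoclinic point to $\tLambda_\eps$, it lies in $W^\st(\tLambda_\eps)\cap W^\un(\tLambda_\eps)$, so the projections $\tz_\eps^+:=\Omega^+(\tz_\eps)$ and $\tz_\eps^-:=\Omega^-(\tz_\eps)$ are both defined, with $\tz_\eps\in W^\st(\tz_\eps^+)$ and $\tz_\eps\in W^\un(\tz_\eps^-)$.

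For $\mathfrak{I}^+$, the fiber estimate \eqref{eqn:convergence_s} applied to the perturbed flow gives
\begin{equation*}
d\bigl(\Phi^s_\eps(\tz_\eps),\,\Phi^s_\eps(\tz_\eps^+)\bigr)\le C\,e^{s\lambda_+}\qquad\text{for all }s\ge 0,
\end{equation*}
with $\lambda_+<0$. The hypothesis that $\bF$ is uniformly $C^{r_0}$ with $r_0\ge 1$ furnishes a uniform Lipschitz constant $L$ for $\bF(\,\cdot\,,\eps)$, whence
\begin{equation*}
\bigl|\bF(\Phi^s_\eps(\tz_\eps^+),\eps)-\bF(\Phi^s_\eps(\tz_\eps),\eps)\bigr|\le L\,C\,e^{s\lambda_+},
\end{equation*}
which is integrable on $[0,+\infty)$ and so establishes absolute convergence of $\mathfrak{I}^+$. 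The argument for $\mathfrak{I}^-$ is entirely symmetric: \eqref{eqn:convergence_u} yields $d(\Phi^s_\eps(\tz_\eps),\Phi^s_\eps(\tz_\eps^-))\le C\,e^{s\mu_-}$ for $s\le 0$ with $\mu_->0$, and the same Lipschitz estimate produces exponential decay on $(-\infty,0]$.

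Linearity of $\mathfrak{I}^\pm$ in $\bF$ is then immediate from linearity of evaluation and of the (now absolutely convergent) integral. I do not anticipate any serious obstacle: the lemma is essentially a repackaging of exponential contraction along stable fibers and expansion along unstable fibers into an $L^1$ bound. The only minor point to justify is the uniform Lipschitz constant for $\bF$; if one prefers to avoid invoking global derivative bounds, it suffices to observe that all orbits $\Phi^s_\eps(\tz_\eps)$ and $\Phi^s_\eps(\tz_\eps^\pm)$ remain in a fixed bounded neighborhood of $\tLambda_\eps\cup\tGamma_\eps$ (being bi-asymptotic to the NHIM), on which $\bF$ is Lipschitz with a constant that can be chosen uniform in $\eps\in(-\eps_0,\eps_0)$.
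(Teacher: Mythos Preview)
Your proposal is correct and follows essentially the same approach as the paper: both arguments combine the exponential fiber estimates \eqref{eqn:convergence_s}, \eqref{eqn:convergence_u} with the uniform Lipschitz bound on $\bF$ coming from the $C^{r_0}$ hypothesis to dominate the integrand by an integrable exponential, and then invoke linearity of the integral. Your added remark about restricting to a bounded neighborhood of $\tLambda_\eps\cup\tGamma_\eps$ to secure a uniform Lipschitz constant is a reasonable precaution but not essential, since the paper assumes $\bF$ is uniformly $C^{r_0}$ on all of $\widetilde M\times\R$.
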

\begin{proof}
The linearity of the operators  follows from the linearity properties of integrals.

To prove convergence, we will use that the  exponential contraction along the stable (unstable) manifold in forward (backward) time, given by\eqref{eqn:convergence_s} and  \eqref{eqn:convergence_u}.
For the stable  manifold, we have
\[|\tPhi^{s}(\tz^{+}_{\varepsilon})-\tPhi^{s}(\tz_{\varepsilon})|<Ce^{s \lambda_{+}},\textrm{ for } s\geq 0,\]
where $C$ is the positive constant and $\lambda_{+}$ is the negative contraction rate from Section \ref{section:NHIM_perturbed}.

Recall that $\textbf{F}$ is uniformly $\mathcal{C}^{r_0}$-differentiable,
so it is Lipschitz with Lipschitz constant $C$. Thus,
\begin{eqnarray*}
|\mathfrak{J}^{+}(\textbf{F},\Phi^{s}_{\varepsilon},z_{\varepsilon})|
&=& \left|\int^{\infty}_{0} \textbf{F}(\Phi^{s}_{\varepsilon}(z_{\varepsilon}^{+}))-\textbf{F}(\Phi^{s}_{\varepsilon}(z_{\varepsilon})) ds\right|\\
&\leq& \int^{\infty}_{0}C_{\textbf{F}}C_{z}e^{s \lambda_{+}} ds\\
&=&-C_{\textbf{F}}C_{z}\frac{1}{\lambda_{+}}
\end{eqnarray*}
Note, the last expression is positive since $\lambda_{+}<0$. Thus the integral is bounded and therefore convergent. The proof for the convergence of
$\mathfrak{J}^{-}(\textbf{F},\Phi^{s}_{\varepsilon},z_{\varepsilon})$ is similar. The difference is that the limits of integration are from $-\infty$ to $0$
and the contraction rate is $-\mu_{-}<0$

Also, the proof holds if $\textbf{F}$ is replaced by any Lipschitz function, in particular, by $\X_\eps \textbf{F}$, where we recall that $\X_\eps=\X_0+\eps\X^1$.  This fact will be used in the proof of the next lemma.
\end{proof}

\begin{lem}[Master Lemma 2]\label{lem:master_2}
\begin{equation}\label{eqn:master_differences}
\begin{split}
\bF(\tz^+_\eps)-\bF(\tz_\eps)=&-\mathfrak{I}^+((\X^0+\eps\X^1)\bF,\tPhi^s_\eps, \tz_\eps),\\
\bF(\tz^-_\eps)-\bF(\tz_\eps)=&\mathfrak{I}^-((\X^0+\eps\X^1)\bF,\tPhi^s_\eps, \tz_\eps).
\end{split}
\end{equation}
\end{lem}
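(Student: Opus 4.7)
The plan is to apply the fundamental theorem of calculus to the difference of $\bF$ along two orbits that are asymptotic to each other, and to use the identity $\frac{d}{ds}\bF(\tPhi^s_\eps(\tz)) = ((\X^0+\eps\X^1)\bF)(\tPhi^s_\eps(\tz))$, which comes directly from the interpretation of vector fields as differential operators as recorded in Section \ref{section:notation} (noting that $\X^0+\eps\X^1$ is the generator of the flow $\tPhi^s_\eps$ in the spatial variable; the extra $\partial_t$ from the extended system acts trivially on functions $\bF$ that depend on $\tz$ only through its composition with the flow, so we do not need to worry about it here).

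For the first identity, I would introduce the auxiliary function
\[ g(s) := \bF(\tPhi^s_\eps(\tz^+_\eps)) - \bF(\tPhi^s_\eps(\tz_\eps)). \]
At $s=0$ we get $g(0) = \bF(\tz^+_\eps) - \bF(\tz_\eps)$. Because $\tz_\eps\in W^\st(\tz^+_\eps)$, the exponential contraction estimate \eqref{eqn:convergence_s} gives $d(\tPhi^s_\eps(\tz_\eps),\tPhi^s_\eps(\tz^+_\eps)) \le C e^{s\lambda_+}$ for $s\ge 0$, and the uniform Lipschitz property of $\bF$ then yields $g(s)\to 0$ as $s\to +\infty$. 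Differentiating, $g'(s) = (\X_\eps \bF)(\tPhi^s_\eps(\tz^+_\eps)) - (\X_\eps \bF)(\tPhi^s_\eps(\tz_\eps))$ where $\X_\eps:=\X^0+\eps\X^1$. Integrating from $0$ to $+\infty$ and using $g(+\infty)=0$,
\[ -g(0) = \int_0^{+\infty} g'(s)\,ds = \mathfrak{I}^+(\X_\eps\bF, \tPhi^s_\eps, \tz_\eps), \]
which is exactly the first claim. The convergence of the improper integral is guaranteed by Master Lemma 1 applied to the Lipschitz function $\X_\eps\bF$, as noted at the end of its proof.

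The second identity is symmetric: set $h(s) := \bF(\tPhi^s_\eps(\tz^-_\eps)) - \bF(\tPhi^s_\eps(\tz_\eps))$, observe $h(0) = \bF(\tz^-_\eps)-\bF(\tz_\eps)$, and use the unstable contraction \eqref{eqn:convergence_u} together with the Lipschitz property of $\bF$ to conclude $h(s)\to 0$ as $s\to -\infty$. The fundamental theorem of calculus on $(-\infty,0]$ then gives
\[ h(0) = \int_{-\infty}^{0} h'(s)\,ds = \mathfrak{I}^-(\X_\eps\bF, \tPhi^s_\eps, \tz_\eps), \]
again with convergence supplied by Master Lemma 1.

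I do not expect any serious obstacle: the only technical points are (i) verifying that $g$ and $h$ tend to zero at the appropriate infinity, which is immediate from the normal hyperbolicity estimates and the uniform Lipschitz regularity of $\bF$, and (ii) checking that the integrand in the FTC agrees with the one in $\mathfrak{I}^\pm$, which is direct from the definition of the vector field as a differential operator. The argument is uniform in $\eps\in(-\eps_0,\eps_0)$ since the rates $\lambda_+,\mu_-$ can be chosen independent of $\eps$, as noted in Section \ref{section:NHIM_perturbed}.
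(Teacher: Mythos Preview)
Your proposal is correct and follows essentially the same approach as the paper: both arguments write the difference $\bF(\tz^+_\eps)-\bF(\tz_\eps)$ via the fundamental theorem of calculus applied to $s\mapsto \bF(\tPhi^s_\eps(\tz^+_\eps))-\bF(\tPhi^s_\eps(\tz_\eps))$, use the normal hyperbolicity contraction together with the Lipschitz property of $\bF$ to kill the boundary term at $+\infty$, and identify the integrand with $(\X^0+\eps\X^1)\bF$ to recover $\mathfrak{I}^+$. Your packaging with the auxiliary functions $g,h$ is slightly cleaner, but the content is the same.
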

\begin{proof}
To prove this lemma, we will begin by computing the derivative of the $i^{\text{th}}$ component of $\textbf{F}$ along the perturbed flow. For $\tz$   a point in $\widetilde{M}$, using \eqref{eqn:vf_derivative_time} we have
\begin{eqnarray*}
\frac{d}{ds}\textbf{F}_{i}(\tPhi^{s}_{\varepsilon}(\tz)) &=& \nabla \textbf{F}_{i}(\tPhi^{s}_{\varepsilon}(\tz))\cdot \frac{d}{ds}\tPhi^{s}_{\varepsilon}(\tz) \\
&=&\X^{0}\textbf{F}_{i}(\tPhi^{s}_{\varepsilon}(\tz))+\varepsilon \X^{1}\textbf{F}_{i}(\tPhi^{s}_{\varepsilon}(\tz);\eps).
\end{eqnarray*}

With the above result, we can now compute the difference in \eqref{eqn:master_differences}. Note that we define a vector field, $\X$, acting on a vector valued function, $\textbf{F}$, as
\[\chi \textbf{F}=(\chi \textbf{F}_{i})_{i}.\]

We have
\begin{eqnarray*}
\textbf{F}(\tz^{+}_{\varepsilon})-\textbf{F}(\tz_{\varepsilon})
&=&\textbf{F}(\tPhi^{T}_{\varepsilon}(\tz_{\epsilon}^{+}))-\textbf{F}(\tPhi^{T}_{\varepsilon}(\tz_{\epsilon})) \\  
&\quad&-\int^{T}_{0}\frac{d}{ds}\left( \textbf{F}(\tPhi^{s}_{\varepsilon}(\tz_{\epsilon}^{+}))-\textbf{F}(\tPhi^{s}_{\varepsilon}(\tz_{\epsilon}))\right)ds.
\end{eqnarray*}

Letting $T$ approach infinity, the first difference vanishes because the homoclinic point $\tz_{\epsilon}$ and its foot point $\tz_{\epsilon}^{+}$ approach each other. We then can rewrite the integral using the expression for the derivative of $\textbf{F}$
along the flow:
\begin{equation*}\begin{split}
&-\int^{\infty}_{0}\left(\X^{0}\textbf{F}(\tPhi^{s}_{\varepsilon}(\tz_{\epsilon}^{+}))-
\X^{0}\textbf{F}(\tPhi^{s}_{\varepsilon}(\tz_{\epsilon}))\right)ds\\&\qquad-
\varepsilon\int^{\infty}_{0}\left(\X^{1}\textbf{F}(\tPhi^{s}_{\varepsilon}(\tz^{+}_{\epsilon});\eps)-
\X^{1}\textbf{F}(\tPhi^{s}_{\varepsilon}(\tz_{\epsilon});\eps)\right)ds \\
&=-\int^{\infty}_{0}\left(\X^{0}\textbf{F}(\tPhi^{s}_{\varepsilon}(\tz_{\epsilon}^{+}))
+\eps\X^{1}\textbf{F}(\tPhi^{s}_{\varepsilon}(\tz^{+}_{\epsilon});\eps)\right)ds\\
&\qquad -\int^{\infty}_{0}\left(\X^{0}\textbf{F}(\tPhi^{s}_{\varepsilon}(\tz_{\epsilon}))+
\X^{1}\textbf{F}(\tPhi^{s}_{\varepsilon}(\tz_{\epsilon});\eps)\right)ds
\\
&=-\int^{\infty}_{0}\left((\X^{0}+\varepsilon\X^{1})\textbf{F}(\tPhi^{s}_{\varepsilon}(\tz_{\epsilon}^{+}))
-(\X^{0}+\varepsilon\X^{1})\textbf{F}(\tPhi^{s}_{\varepsilon}(\tz_{\epsilon}))\right) ds
\\
&=-\mathfrak{J}^{+}((\X^{0}+\varepsilon \X^{1})\textbf{F},\tPhi^{s}_{\varepsilon},\tz_{\varepsilon}).
\end{split}\end{equation*}

The proof for $\mathfrak{J}^{-}((\X^{0}+\varepsilon \X^{1})\textbf{F},\tPhi^{s}_{\varepsilon},\tz_{\varepsilon})$ is similarly. The main difference is that the limits of integration are from $-\infty$ to $0$.
\end{proof}

\begin{lem}[Master Lemma 3]\label{lem:master_3}
\begin{equation}\label{eqn:master_gronwall}
\begin{split}
\mathfrak{I}^+(\bF,\tPhi^s_\eps, \tz_\eps)=&\mathfrak{I}^+(\bF,\tPhi^s_0, \tz_0)+O(\eps^{\varrho}),\\
\mathfrak{I}^-(\bF,\tPhi^s_\eps, \tz_\eps)=&\mathfrak{I}^-(\bF,\tPhi^s_0, \tz_0)+O(\eps^{\varrho}),
\end{split}
\end{equation}
for $0<\varrho<1$. The integrals on the right-hand side are evaluated with $\X^1=\X^1(\cdot;0)$. 
\end{lem}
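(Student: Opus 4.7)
The plan is to prove the $+$-statement; the $-$-statement is symmetric, with the role of $\lambda_+$ replaced by $\mu_-$. I would introduce a cut-off $T=T(\eps)\to\infty$ as $\eps\to 0$, to be optimized at the end, and split
\begin{equation*}
\mathfrak{I}^+(\bF,\tPhi^s_\eps,\tz_\eps)-\mathfrak{I}^+(\bF,\tPhi^s_0,\tz_0)=I_{\mathrm{near}}(T)+I_{\mathrm{tail}}(T),
\end{equation*}
where $I_{\mathrm{near}}(T)$ is the integral of the difference of integrands over $[0,T]$ and $I_{\mathrm{tail}}(T)$ is the integral over $[T,+\infty)$.

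For the near part, I would use Gronwall's inequality: because $\X^1$ is uniformly $C^1$ in all variables, the vector field $\X^0+\eps\X^1$ has a Lipschitz constant $L$ that is uniform in $\eps$ on a neighborhood of the relevant piece of the homoclinic channel, and therefore $|\tPhi^s_\eps(\tz)-\tPhi^s_0(\tz)|\le C\eps(e^{Ls}-1)/L$ for $s\ge 0$. Combining this with the $O(\eps)$-closeness of base points $\tz_\eps=\tz_0+O(\eps)$ given in \eqref{eqn:zeps_z0} and the analogous $O(\eps)$-closeness of the foot points $\tz_\eps^\pm=\tz_0^\pm+O(\eps)$ (which follows from the $C^\ell$-smooth persistence of $\tLambda_\eps$, of its stable/unstable foliations, and of the projections $\Omega^\pm$ recalled in Section~\ref{section:NHIM_perturbed}), together with the Lipschitz continuity of $\bF$, a double triangle inequality gives
\begin{equation*}
|I_{\mathrm{near}}(T)|\le C_1\eps e^{LT}.
\end{equation*}
For the tail, I would reuse the exponential contraction argument from the proof of Lemma~\ref{lem:master_1}: both $|\bF(\tPhi^s_\eps(\tz_\eps^+))-\bF(\tPhi^s_\eps(\tz_\eps))|$ and its $\eps=0$ counterpart are bounded by $Ce^{s\lambda_+}$ with $\lambda_+<0$ (uniformly in $\eps$, by Section~\ref{section:NHIM_perturbed}), so
\begin{equation*}
|I_{\mathrm{tail}}(T)|\le C_2 e^{T\lambda_+}.
\end{equation*}

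Balancing the two contributions by setting $\eps e^{LT}\asymp e^{T\lambda_+}$ forces the choice $T=\log(1/\eps)/(L+|\lambda_+|)$, which produces a total error of $O(\eps^{\varrho})$ with $\varrho=|\lambda_+|/(L+|\lambda_+|)\in(0,1)$, as claimed; any smaller positive $\varrho$ also works. The main obstacle, in my view, is not any one estimate but the structural tension between the Gronwall blow-up rate $L$ and the hyperbolic contraction rate $|\lambda_+|$, which in general are unrelated and so force a sub-linear loss $\eps^\varrho$ rather than $O(\eps)$. A secondary point to verify carefully is that $L$, $\lambda_+$, $\mu_-$ and all implicit constants can indeed be chosen uniformly in $\eps$ on a compact neighborhood of $\tGamma_0$; this is a consequence of the uniform smoothness of $\X^1$ and the uniform hyperbolicity estimates from Section~\ref{section:NHIM_perturbed}. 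Finally, the clarifying remark that $\X^1$ on the right-hand side is evaluated at $\eps=0$ is what allows this lemma to be combined with Lemma~\ref{lem:master_2}: when $\bF$ is later replaced by something like $(\X^0+\eps\X^1(\cdot;\eps))\bF$, the extra $\eps$-dependence in $\X^1(\cdot;\eps)=\X^1(\cdot;0)+O(\eps)$ contributes only to the $O(\eps^\varrho)$ error and may be dropped at leading order.
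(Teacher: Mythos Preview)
Your proposal is correct and follows essentially the same approach as the paper: split the integral at a logarithmic cut-off $T\sim\log(1/\eps)$, control the tail by the hyperbolic contraction rate $\lambda_+$, and control the near part by Gronwall combined with the Lipschitz property of $\bF$ and the $O(\eps)$-closeness of base and foot points. The only cosmetic difference is that the paper invokes its Appendix Lemma~\ref{lem:Gronwall_application}, which already packages the Gronwall growth as a uniform $O(\eps^{\rho_1})$ bound on $[0,k\ln(1/\eps)]$ and then integrates to get $O(\eps^{\rho_1}\ln(1/\eps))$, whereas you keep the raw estimate $C\eps e^{Ls}$ and optimize $T$ explicitly to obtain the concrete exponent $\varrho=|\lambda_+|/(L+|\lambda_+|)$; both routes yield the same $O(\eps^\varrho)$ with $0<\varrho<1$.
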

\begin{proof}

To prove this lemma, we will use both the Gronwall inequality from the Appendix \ref{sec:gronwall} and the Lipschitz property of $\textbf{F}$.
The Gronwall inequality \eqref{eqn:eq_4} gives

\[\tPhi^{s}_{\varepsilon}(\tz^{+}_{\varepsilon})=\tPhi^{s}_{0}(\tz^{+}_{0})+O(\varepsilon^{\rho_{1}}),\]
and
\[\tPhi^{s}_{\varepsilon}(\tz_{\varepsilon})=\tPhi^{s}_{0}(\tz_{0})+O(\varepsilon^{\rho_{1}}),\]
where $0<\rho_{1}<1$. Note that these equalities hold on an interval of time $0<t<k\ln\left(\frac{1}{\varepsilon}\right)$, for $k\leq\frac{1-\rho}{C_0}$, where $C_0$ is the Lipschitz constant of $\X_0$; see Appendix \ref{sec:gronwall}.

Before using the results from Gronwall, we will split the integrals into two parts:
\begin{eqnarray*}
\mathfrak{J}^{+}(\textbf{F},\tPhi^{s}_{\varepsilon},\tz_{\varepsilon})-\mathfrak{J}^{+}(\textbf{F},\tPhi^{s}_{0},\tz_{0}))
&=&\,\int_{0}^{\infty}\textbf{F}(\tPhi^{s}_{\varepsilon}(\tz_{\epsilon}^{+}))
-\textbf{F}(\tPhi^{s}_{\varepsilon}(\tz_{\epsilon}))ds \\
&&-\int_{0}^{\infty}\textbf{F}(\tPhi^{s}_{0}(\tz_{0}^{+}))-\textbf{F}(\tPhi^{s}_{0}(\tz_{0}))ds.
\end{eqnarray*}

Consider the first integral, which can be written as
\[\int_{0}^{T}\textbf{F}(\tPhi^{s}_{\varepsilon}(\tz_{\epsilon}^{+}))-
\textbf{F}(\tPhi^{s}_{\varepsilon}(\tz_{\epsilon}))ds+
\int_{T}^{\infty}\textbf{F}(\tPhi^{s}_{\varepsilon}(\tz_{\epsilon}^{+}))-
\textbf{F}(\tPhi^{s}_{\varepsilon}(\tz_{\epsilon}))ds.\]

Examining the second of these two integrals, we have
\begin{eqnarray*}
\left|\int_{T}^{\infty}\textbf{F}(\tPhi^{s}_{\varepsilon}(\tz_{\epsilon}^{+}))-
\textbf{F}(\tPhi^{s}_{\varepsilon}(\tz_{\epsilon}))ds\right|
&\leq& \int^{\infty}_{T}C_{\textbf{F}}Ce^{s \lambda_{+}} ds \\  
&=& {\bf C}\frac{1}{|\lambda_{+}|}e^{T \lambda_{+}}
\end{eqnarray*}
where ${\bf C}=C_{\textbf{F}}C$.

Now if we let $T=k\ln\left(\frac{1}{\varepsilon}\right)$, then the integral is bounded by
$$C\frac{1}{|\lambda_{+}|}\varepsilon^{k|\lambda_{+}|}$$
More importantly, we have shown that the integral is bounded by $O\left(\varepsilon^{\rho_{2}}\right)$ with $\rho_{2}=k|\lambda_{+}|$.
 
A similar argument holds for
\[\int_{T}^{\infty}\textbf{F}(\tPhi^{s}_{0}(\tz_{0}^{+}))-\textbf{F}(\tPhi^{s}_{0}(\tz_{0}))ds.\]
Returning to the integral from $0$ to $T$, we have
\begin{equation*}\begin{split}
\int_{0}^{T}\textbf{F}(\tPhi^{s}_{\varepsilon}(\tz_{\epsilon}^{+}))-\textbf{F}(\tPhi^{s}_{\varepsilon}(\tz_{\epsilon}))ds
-\int_{0}^{T}\textbf{F}(\tPhi^{s}_{0}(\tz_{0}^{+}))-\textbf{F}(\tPhi^{s}_{0}(\tz_{0}))ds
\\= \int_{0}^{T}\textbf{F}(\tPhi^{s}_{\varepsilon}(\tz_{\epsilon}^{+}))- \textbf{F}(\tPhi^{s}_{0}(\tz_{0}^{+}))ds
- \int_{0}^{T}\textbf{F}(\tPhi^{s}_{\varepsilon}(\tz_{\epsilon}))-\textbf{F}(\tPhi^{s}_{0}(\tz_{0}))ds.
\end{split}\end{equation*}

Now we can apply the Gronwall inequality \eqref{eqn:eq_4} as well as the Lipschitz property of $\textbf{F}$. This show that the difference of the integrals is bounded by
\[\int_{0}^{T}C_{\textbf{F}}O(\varepsilon^{\rho_{1}})ds.\]

The order of the integral is bounded by
\[O\left(\varepsilon^{\rho_{1}}\ln\left(\frac{1}{\varepsilon}\right)\right)=O(\eps^{\rho_3}),\]
for some $\rho_{3}<\rho_{1}<1$. 

Finally, let $\rho=\min\{\rho_{2},\rho_{3}\}$. Returning to the original expression, we have
\[|\mathfrak{J}^{+}(\textbf{F},\tPhi^{s}_{\varepsilon},\tz_{\varepsilon})-
\mathfrak{J}^{+}(\textbf{F},\tPhi^{s}_{0},\tz_{0})|
\leq O(\varepsilon^{\rho}).\]
\end{proof}

\begin{lem}[Master Lemma 4]\label{lem:master_4}
If $\|\textbf{F}\|_{C^1}$ is $O(\eps)$ then
\begin{equation}\label{eqn:master_gronwall}
\begin{split}
\mathfrak{I}^+(\bF,\Phi^s_\eps, z_\eps)=&\mathfrak{I}^+(\bF,\Phi^s_0, z_0)+O(\eps^{1+\varrho}),\\
\mathfrak{I}^-(\bF,\Phi^s_\eps, z_\eps)=&\mathfrak{I}^-(\bF,\Phi^s_0, z_0)+O(\eps^{1+\varrho}),
\end{split}
\end{equation}
for $0<\varrho<1$.
The integrals on the right-hand side are evaluated with $\X^1=\X^1(\cdot;0)$. 
\end{lem}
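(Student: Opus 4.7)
The plan is to replay the proof of Master Lemma 3 almost verbatim, keeping careful track of how the hypothesis $\|\bF\|_{C^1}=O(\eps)$ contributes one extra factor of $\eps$ at each step. As before, I split
\[
\mathfrak{I}^+(\bF,\tPhi^s_\eps, \tz_\eps)-\mathfrak{I}^+(\bF,\tPhi^s_0, \tz_0)
\]
into a finite-time piece on $[0,T]$ and a tail on $[T,\infty)$, with $T=k\ln(1/\eps)$ for a small constant $k>0$ to be chosen.

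For the tails I use the exponential stable-fiber estimate $|\tPhi^s_\eps(\tz^+_\eps)-\tPhi^s_\eps(\tz_\eps)|\leq Ce^{s\lambda_+}$ together with the Lipschitz constant of $\bF$, which is now $O(\eps)$:
\[
\Bigl|\int_T^\infty \bigl(\bF(\tPhi^s_\eps(\tz^+_\eps))-\bF(\tPhi^s_\eps(\tz_\eps))\bigr)\,ds\Bigr|
\leq \frac{C\cdot O(\eps)}{|\lambda_+|}\,e^{T\lambda_+}
=O(\eps^{1+k|\lambda_+|}),
\]
and the same bound holds for the unperturbed tail.

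For the finite-time piece I invoke the Gronwall estimate from Appendix~\ref{sec:gronwall}, which gives
$\tPhi^s_\eps(\tz_\eps^{\pm})=\tPhi^s_0(\tz_0^{\pm})+O(\eps^{\rho_1})$ uniformly on $[0,T]$, provided $k$ is chosen small enough that $kC_0\leq 1-\rho_1$, where $C_0$ is the Lipschitz constant of $\X^0$. Regrouping the two integrals (subtract and add $\bF(\tPhi^s_\eps(\tz^+_\eps))-\bF(\tPhi^s_0(\tz^+_0))$ and similarly for the base points) and combining this Gronwall bound with $\mathrm{Lip}(\bF)=O(\eps)$ produces
\[
\Bigl|\int_0^T\!\!\bigl[\bF(\tPhi^s_\eps(\tz^+_\eps))-\bF(\tPhi^s_\eps(\tz_\eps))\bigr]ds-\int_0^T\!\!\bigl[\bF(\tPhi^s_0(\tz^+_0))-\bF(\tPhi^s_0(\tz_0))\bigr]ds\Bigr|
\leq O(\eps)\cdot O(\eps^{\rho_1})\cdot T,
\]
which is of order $\eps^{1+\rho_1}\ln(1/\eps)=O(\eps^{1+\rho_3})$ for any $0<\rho_3<\rho_1$.

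Setting $\varrho=\min\{k|\lambda_+|,\rho_3\}\in(0,1)$ and summing the four contributions yields the claimed $O(\eps^{1+\varrho})$ estimate. The statement for $\mathfrak{I}^-$ follows by the symmetric argument, replacing the stable-fiber rate $\lambda_+$ on $[0,\infty)$ by the unstable-fiber rate $\mu_-$ on $(-\infty,0]$. The only delicate point is balancing the exponents: one must choose $k$ small enough that both $k|\lambda_+|<1$ and $kC_0\leq 1-\rho_1$ hold simultaneously, which is always possible; I do not anticipate any substantive obstacle beyond this bookkeeping, because the hypothesis $\|\bF\|_{C^1}=O(\eps)$ simply extracts a uniform prefactor $\eps$ from every estimate appearing in the proof of Lemma~\ref{lem:master_3}.
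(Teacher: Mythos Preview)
Your proposal is correct and follows essentially the same approach as the paper: replay the proof of Master Lemma~3, observing that the hypothesis $\|\bF\|_{C^1}=O(\eps)$ replaces the Lipschitz constant $C_{\bF}$ by $O(\eps)$ throughout, which gains one extra power of $\eps$ in both the tail and the finite-time estimates. The paper states this more tersely (``the proof is now similar to the proof of Lemma~4.3''), while you have spelled out the bookkeeping, but the argument is the same.
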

\begin{proof}
By the mean value theorem, we have
\[|\textbf{F}(z_{1})-\textbf{F}(z_{2})|\leq M |z_{1}-z_{2}|,\]
where $M=\sup|D\textbf{F}(z)|$. Note that $M<\infty$ since $\textbf{F}$ is bounded together with its derivatives.  
Now, by the hypothesis, we can bound $M$ by $O(\varepsilon)$. The proof is now similar to the proof of Lemma
4.3. Essentially, the Lipschitz constant of $\textbf{F}$, $C_{\textbf{F}}$, is replaced with $O(\varepsilon)$. Thus,
\[|\mathfrak{J}^{+}(\textbf{F},\Phi^{s}_{\varepsilon},z_{\varepsilon})-
\mathfrak{J}^{+}(\textbf{F},\Phi^{s}_{0},z_{0})|\leq
O(\varepsilon)O(\varepsilon^{\rho_{1}})+O(\varepsilon)O(\varepsilon^{\rho_{2}}).\]
Finally, let $\rho=\min\{\rho_{1},\rho_{2}\}$.

The proof for $|\mathfrak{J}^{-}(\textbf{F},\Phi^{s}_{\varepsilon},z_{\varepsilon})-\mathfrak{J}^{-}(\textbf{F},\Phi^{s}_{0},z_{0})|$ follows similarly.
\end{proof}

\section{Scattering map for the perturbed rotator-pendulum system}
\label{sec:scattering_perturbed}
\subsection{Existence of transverse homoclinic connections}

Consider the coordinate system $z=(y,x,I,\theta)$ defined in Section \ref{sec:coordinates-rot-pend}.
Let us restrict to $z=(y,x,I,\theta)$ in the neighborhood $\mathscr{N}'$  where $y_i=\pm\frac{1}{2}(p_i^1+V_i(q_i))$.
In the unperturbed case, $W^\un(\tLambda_0)=W^\st(\tLambda_0)$ are given by $y=0$.

In terms of the extended coordinates $(y,x,I,\theta,t)$,  a point  $\tz^*_0\in\tLambda_0$
can be described as
\[\tz^*_0=(0,0,I,\theta,t),\]
and applying the flow to this point yields
\[\tPhi^s_0(\tz^*_0)=(0,0,I,\theta+\omega(I)s,t+s).\]

A point $\tz_0\in W^\un(\tLambda_0)=W^\st(\tLambda_0)$ can be described in coordinates as
\[\tz_0=(0,x_0,I,\theta),\]
and applying the flow to this point yields
\[\tPhi^s_0(\tz_0)=(0,x(s),I,\theta+\omega(I)s,t+s),\]
where $x(s)$ represents the $x$-component of the solution curve of the Hamiltonian $h_1$ with initial condition at $s=0$ equal to $(0,x_0)$, evaluated at time~$s$.

In the perturbed case, for $\eps\neq 0$ small, we can describe both the stable and unstable manifolds as graphs of $C^{\ell-1}$-smooth functions $y^\st_\eps$, $y^\un_\eps$,  over
$(x,I,\theta,t)$ given by
\begin{equation}\label{eqn:y_graphs}
\begin{split}
y^\st_\eps=&y^\st_\eps(x,I,\theta,t),\\
y^\un_\eps=&y^\un_\eps(x,I,\theta,t),
\end{split}
\end{equation}
respectively, for $(x,I,\theta,t)\in \mathscr{N}'\cap W^\un(\tLambda_0)$.

The result below gives sufficient conditions for the existence of a transverse homoclinic intersection  of $W^\st(\tLambda_\eps)$ and $W^\un(\tLambda_\eps)$. The proof is essentially the same as for Proposition 2.6. in \cite{gidea2018global}, except that  the latter  is under the assumption that the perturbation is Hamiltonian.
Therefore we will omit the proof.

\begin{thm}\label{thm:transverse_homoclinic}
For $(x,I,\theta,t)\in \mathscr{N}'$ the difference between $y^\st_\eps(x,I,\theta,t)$ and $y^\un_\eps(x,I,\theta,t)$ is given by
\begin{equation}\label{eqn:yun_minus_yst}\begin{split}
y^\un_\eps-y^\st_\eps=&-\eps\int_{-\infty}^{+\infty}\left(\X^{1}y
\left(0,0,I,\theta+\omega(I)s,t+s\right)\right.\\
  &\qquad\qquad\left.-\X^{1}y\left(0,x(s),I,\theta+\omega(I)s,t+s\right)\right)ds\\
  &+O\left(\eps^{1+\rho}\right)\\
= &-\eps\int_{-\infty}^{+\infty}\left([y,H_1]\left(0,0,I,\theta+\omega(I)s,t+s\right)\right.\\
  &\qquad\qquad \left.-[y,H_1]\left(0,x(s),I,\theta+\omega(I)s,t+s\right)\right)ds
  \\
  &+O\left(\eps^{1+\rho}\right).
\end{split}
\end{equation}
The second formula  corresponds to the case when the perturbation is Hamiltonian.

If $x^*=x^*(I,\theta,t)$ is a non-degenerate zero of the mapping
\begin{equation}\label{eqn:yun_minus_yst_potential}\begin{split}
x_0\in\R^n\mapsto - \int_{-\infty}^{+\infty}&\left(\X^{1}y\left(0,0,I,\theta+\omega(I)s,t+s\right)\right.\\
  & \left. -\X^{1}y\left(0,x(s),I,\theta+\omega(I)\sigma,t+s\right)\right)ds \in \R,
\end{split}
\end{equation}
then there exists $\eps_0>0$ sufficiently small such that for all $0<|\eps|<\eps_0$
$W^\st(\tLambda_\eps)$ and $W^\un(\tLambda_\eps)$ have a transverse homoclinic intersection which can be parametrized as
\[y^\un_\eps(x^*(I,\theta,t),I,\theta,t)=y^\st_\eps(x^*(I,\theta,t),I,\theta,t),\]
for $(I,\theta,t)$ in some open set in $\R^d\times \T^d\times \T^1$.
\end{thm}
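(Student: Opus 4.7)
The plan is to specialize Master Lemma \ref{lem:master_2} to $\bF=y$, combine the identities it yields on $W^\un(\tLambda_\eps)$ and $W^\st(\tLambda_\eps)$, pass to the unperturbed flow via Master Lemma \ref{lem:master_4} to isolate the $O(\eps)$ Melnikov contribution, and then apply the implicit function theorem for the transverse-intersection part. The key preliminary observation is that in the symplectic coordinates $(y,x,I,\theta)$ one has $H_0=h_0(I)+y$, so $\X^0 y=[y,H_0]=0$; since $y_i=\pm(p_i^2/2+V_i(q_i))$ is globally defined on $M$ and is a first integral of $\X^0$ there too, we have $(\X^0+\eps\X^1)y=\eps\X^1 y$ throughout $\tM$, not merely on $\mathscr{N}'$. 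This is what makes the subsequent computation clean.

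For the two graphs in \eqref{eqn:y_graphs} I pick the same base coordinates $(x,I,\theta,t)$ and set $\tz^\un_\eps=(y^\un_\eps,x,I,\theta,t)\in W^\un(\tLambda_\eps)$ and $\tz^\st_\eps=(y^\st_\eps,x,I,\theta,t)\in W^\st(\tLambda_\eps)$, both of which collapse to $\tz_0=(0,x,I,\theta,t)$ at $\eps=0$. Master Lemma \ref{lem:master_2} with $\bF=y$ then produces
\begin{align*}
y^\un_\eps &= y(\tz^{\un,-}_\eps)-\eps\,\mathfrak{I}^-(\X^1 y,\tPhi^s_\eps,\tz^\un_\eps),\\
y^\st_\eps &= y(\tz^{\st,+}_\eps)+\eps\,\mathfrak{I}^+(\X^1 y,\tPhi^s_\eps,\tz^\st_\eps),
\end{align*}
and Master Lemma \ref{lem:master_4} applied to $\eps\X^1 y$ (which is $O(\eps)$ in $C^1$) replaces the perturbed flow and base points by their unperturbed counterparts at cost $O(\eps^{1+\rho})$. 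An auxiliary check shows that $y=O(\eps^2)$ on $\tLambda_\eps$: indeed $\tLambda_\eps$ is $O(\eps)$-close to $\{p=q=0\}$ and the Morse conditions $V_i(0)=V_i'(0)=0$ force $\pm(p_i^2/2+V_i(q_i))$ to vanish to second order at the equilibrium, so the boundary term $y(\tz^{\un,-}_\eps)-y(\tz^{\st,+}_\eps)$ is absorbed into the $O(\eps^{1+\rho})$ remainder. Subtracting the two identities and using $\tz^+_0=\tz^-_0=(0,0,I,\theta,t)$ together with $\tPhi^s_0(\tz_0)=(0,x(s),I,\theta+\omega(I)s,t+s)$ collapses $\mathfrak{I}^-+\mathfrak{I}^+$ into the single bi-infinite integral over $\real$, yielding the first formula in \eqref{eqn:yun_minus_yst}; absolute convergence is Master Lemma \ref{lem:master_1}, and the Hamiltonian form follows from $\X^1 y=[y,H_1]$ as recorded in Section~\ref{sec:evolution}.

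For the final assertion I set $\Psi(x_0;\eps)=\eps^{-1}(y^\un_\eps-y^\st_\eps)(x_0,I,\theta,t)$, so that $\Psi(\cdot\,;0)$ is precisely the map in \eqref{eqn:yun_minus_yst_potential}. A non-degenerate zero $x^*$ of $\Psi(\cdot\,;0)$ together with a $C^1$-version of the above expansion provides, via the implicit function theorem, a unique nearby zero $x^*(I,\theta,t)$ of $y^\un_\eps-y^\st_\eps$ for every small $\eps\neq 0$; invertibility of $D_{x_0}\Psi(x^*;0)$ is exactly the condition that the resulting intersection satisfies the transversality property \eqref{goodtransversal}, because the $n$ directions along which $y^\un_\eps$ and $y^\st_\eps$ can differ are precisely the $y$-fibers transverse to both manifolds. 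The main obstacle I anticipate is upgrading the Master Lemma estimates to the $C^1$-topology in the parameters $(x,I,\theta,t)$, which is needed because the remainder is only $O(\eps^\rho)$ rather than $O(\eps)$, so the IFT application requires uniform control of derivatives in the base variables; this upgrade is purely hyperbolic and does not use symplecticity of $\X^1$, so it transfers verbatim from the proof of Proposition~2.6 in \cite{gidea2018global}.
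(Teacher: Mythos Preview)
Your proposal is correct. The paper itself omits the proof entirely, pointing instead to Proposition~2.6 of \cite{gidea2018global}; your argument is precisely the one that reference carries out, recast through the Master Lemmas of Section~\ref{sec:master_lemmas}. The crucial structural input---that the pendulum energies $y_i=\pm(p_i^2/2+V_i(q_i))$ extend to global first integrals of $\X^0$, so that $(\X^0+\eps\X^1)y=\eps\X^1 y$ holds along the full orbit even after it exits $\mathscr{N}'$---is exactly right and is what singles $y$ out as a usable Melnikov test function.

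Two minor remarks. First, the Master Lemmas in Section~\ref{sec:master_lemmas} are literally stated for a homoclinic point $\tz_\eps\in\tGamma_\eps$, whereas you invoke $\mathfrak{I}^-$ at $\tz^\un_\eps\in W^\un(\tLambda_\eps)$ and $\mathfrak{I}^+$ at $\tz^\st_\eps\in W^\st(\tLambda_\eps)$ separately; the proofs of Lemmas~\ref{lem:master_1}--\ref{lem:master_4} only use the one-sided asymptotics, so this extension is immediate, but it is worth flagging. Second, your quadratic-vanishing argument for the boundary term $y(\tz^{\un,-}_\eps)-y(\tz^{\st,+}_\eps)=O(\eps^2)$ does not actually require $V_i(0)=0$ (the paper never asserts this): since both footpoints lie on $\tLambda_\eps$ and are $O(\eps)$-close to the common point $(0,0,I,\theta,t)$, and $\nabla_{(p,q)}y$ vanishes there, the \emph{difference} is $O(\eps^2)$ regardless of the constant value $y_{|\Lambda_0}$.

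Your handling of the transversality conclusion is also the standard one: smooth dependence of $W^{\st,\un}(\tLambda_\eps)$ on $\eps$ lets you factor $y^\un_\eps-y^\st_\eps=\eps\,\Psi(x_0,I,\theta,t;\eps)$ with $\Psi$ of class $C^{\ell-2}$, so the implicit function theorem applies directly at the non-degenerate zero $x^*$ of $\Psi(\cdot;0)$ without needing a separate $C^1$ upgrade of the $O(\eps^\rho)$ remainder.
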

\begin{rem}
In the case when  both the system and the perturbation are Hamiltonian, it is shown in \cite{gidea2018global} that the corresponding condition \eqref{eqn:yun_minus_yst_potential} is   for a $C^1$-open and $C^\infty$dense set of perturbation $H_1$. In particular, it is generic.

When the perturbation is non-conservative, it is possible that $W^\st(\tLambda_\eps)$ and $W^\un(\tLambda_\eps)$ do not intersect for any $\eps\neq 0$, even though for $\eps=0$ we have $W^\st(\tLambda_0)=W^\un(\tLambda_0)$.
That is, a non-conservative perturbation can destroy the homoclinic intersection.
The condition \eqref{eqn:yun_minus_yst_potential} that guarantees the existence of such an intersection is non-generic. The next sections are under the assumption that
$W^\st(\tLambda_\eps)$ and $W^\un(\tLambda_\eps)$  intersect transversally for $0<|\eps|<\eps_0$.
\end{rem}

\subsection{Change in action by the scattering map}
\label{sec:pend_rot_change_action}
{$ $}

Assume: \begin{itemize}
\item $\tz_\eps$ is a homoclinic point for the perturbed system, i.e., $\tz_\eps\in
W^\st(\tLambda_\eps)\cap W^\un(\tLambda_\eps)$, 
\item$\tz^\pm_\eps=\Omega^\pm(\tz_\eps)\in\tLambda_\eps$,
\item $\tz_0$ is a homoclinic point for the unperturbed system, i.e.,  $\tz_0\in W^\st(\tLambda_0)\cap W^\un(\tLambda_0)$,
corresponding to $\tz_0$ via \eqref{eqn:zeps_z0}, and 
\item $\tz^\pm_0=\Omega^\pm(\tz_0)\in\tLambda_0$.
\end{itemize}

The existence of the homoclinic point $\tz_\eps$ is guaranteed  provided that the conditions from Theorem \ref{thm:transverse_homoclinic} are met. 

Under the above assumptions, we have $\tsigma_\eps(\tz_\eps^-)=\tz_\eps^+$, and 
$\tsigma_0(\tz_0^-)=\tz_0^+$.  
We recall here that for the unperturbed system, the scattering map is the identity   $\tsigma_0=\textrm{Id}$, hence, in terms of action-angle coordinates $(I,\theta)$,  $I(\tz_0^-)=I(\tz_0^+)$, and $\theta(\tz_0^-)=\theta(\tz_0^+)$.

The result below describes the relation between $\tsigma_\eps$ and $\tsigma_0$ in terms of the action coordinate $I$.

\begin{thm}\label{prop:change_in_I}
The change in action $I$ by the scattering map $\tilde{\sigma}_\eps$ is given by:
\begin{equation}\label{eqn:change_in_I_non_ham}\begin{split}
  I\left(\tz^{+}_{\eps}\right)-I\left(\tz^-_{\eps}\right)
= &-\eps\int_{-\infty}^{+\infty}\left(\X^{1}I(\tPhi^{s}_{0}(\tz^\pm_{0}))
  -\X^{1}I(\tPhi^{s}_{0}(\tz_{0}))\right)ds\\
  &+O\left(\eps^{1+\rho}\right)\\
= &-\eps\int_{-\infty}^{+\infty}\left([I,H_1](\tPhi^{s}_{0}(\tz^\pm_{0}))
  -[I,H_1](\tPhi^{s}_{0}(\tz_{0}))\right)ds
  \\
  &+O\left(\eps^{1+\rho}\right).
\end{split}\end{equation}
where $\tz^+_0=\tz^-_0=\tz^\pm$, and $0<\varrho<1$.

The second formula  corresponds to the case when the perturbation is Hamiltonian. 
The integrals on the right-hand side are evaluated with $\X^1=\X^1(\cdot;0)$ and $H_1=H_1(\cdot;0)$, respectively.
\end{thm}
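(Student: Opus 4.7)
The plan is to derive the formula by applying the Master Lemmas from Section~\ref{sec:master_lemmas} to the function $\bF = I$, exploiting the fact that the unperturbed Hamiltonian flow preserves the action. First, I would observe that since $H_0 = h_0(I) + h_1(y,x)$ depends on $I$ but not on $\theta$ (and $h_1$ does not depend on $I$), the evolution equation \eqref{eqn:evolution3} gives $\X^0 I = 0$. Therefore $(\X^0 + \eps \X^1)I = \eps \X^1 I$, which already localizes the leading contribution at order $\eps$.

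Next, I would apply Master Lemma~\ref{lem:master_2} with $\bF = I$ at the perturbed homoclinic point $\tz_\eps$ with its foot-points $\tz_\eps^\pm = \Omega^\pm(\tz_\eps)$, and subtract the two resulting identities to obtain
\begin{equation*}
I(\tz_\eps^+) - I(\tz_\eps^-) = -\eps\,\mathfrak{I}^+(\X^1 I,\tPhi^s_\eps,\tz_\eps) - \eps\,\mathfrak{I}^-(\X^1 I,\tPhi^s_\eps,\tz_\eps).
\end{equation*}
The convergence of each integral is guaranteed by Master Lemma~\ref{lem:master_1}. Then, for each of the two operators, I would factor the leading $\eps$ and invoke Master Lemma~\ref{lem:master_3} to replace the perturbed flow $\tPhi^s_\eps$ and the perturbed homoclinic point $\tz_\eps$ by their unperturbed counterparts $\tPhi^s_0$ and $\tz_0$, at the cost of an error of order $\eps \cdot \eps^\varrho = \eps^{1+\varrho}$.

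The final step is a geometric observation using hypothesis \textbf{(H0-iii)}: since in the unperturbed system the stable and unstable fibers through each point of $\tLambda_0$ coincide, the projections satisfy $\tz_0^+ = \tz_0^- =: \tz_0^{\pm}$, so the two integrands match. Combining $\mathfrak{I}^+$ and $\mathfrak{I}^-$ evaluated at this common foot-point then collapses into a single integral over $(-\infty, +\infty)$, yielding the first equality of \eqref{eqn:change_in_I_non_ham}. The second equality follows by specializing to $\X^1 = J\nabla H_1$ and recognizing $\X^1 I = [I, H_1]$ via the formulas in Section~\ref{sec:evolution}.

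The main obstacle is bookkeeping the sizes of error terms: one must be careful that the $O(\eps^\varrho)$ error from Master Lemma~\ref{lem:master_3} is multiplied by the prefactor $\eps$, so the final error truly sharpens to $O(\eps^{1+\varrho})$ rather than a weaker $O(\eps^\varrho)$. Equivalently, one could apply Master Lemma~\ref{lem:master_4} directly to the $O(\eps)$-small integrand $\eps\X^1 I$ and obtain the same conclusion in one step. The other subtle point is that the unperturbed foot-points $\tz_0^+$ and $\tz_0^-$ are distinct \emph{a priori} notions (images under $\Omega^+$ versus $\Omega^-$), and their identification relies essentially on the degeneracy hypothesis \textbf{(H0-iii)} rather than on any computation; this is what allows the two half-line integrals to be consolidated into an integral over the whole real line.
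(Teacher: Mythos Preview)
Your proposal is correct and follows essentially the same approach as the paper: apply Master Lemmas~\ref{lem:master_1} and~\ref{lem:master_2} with $\bF=I$, use $\X^0 I=0$ from \eqref{eqn:evolution3}, replace the perturbed flow by the unperturbed one via Master Lemma~\ref{lem:master_3} at a cost $O(\eps^{1+\varrho})$, and identify $\tz_0^+=\tz_0^-$ via \textbf{(H0-iii)} to combine the two half-line integrals. The only structural difference is that the paper first isolates the more general identity with an arbitrary time shift $\varsigma$ as a separate Lemma~\ref{lem:I_of_Phi} (since that version is reused in the proof of Theorem~\ref{prop:change_in_theta}) and then specializes to $\varsigma=0$, whereas you go directly to the case $\varsigma=0$.
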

\begin{proof}
The key observation is that the unperturbed Hamiltonian $H_0$ for the rotator--pendulum system does not depend on $\theta$, hence $I$ is a slow variable, as it can be seen from \eqref{eqn:evolution3}.

The proof follows immediately from Lemma \ref{lem:I_of_Phi} below, by making $\sigma=0$, and subtracting
\eqref{eqn:I_of_Phi_minus} from  \eqref{eqn:I_of_Phi_plus}.
We also use the fact that for the unperturbed  pendulum-rotator system the foot points of the stable fiber and of the unstable fiber through the same point $\tz_0$ coincide, i.e., $\tz^-_0=\tz^+_0$ which we denote $\tz_0^\pm$.
\end{proof}

\begin{lem}\label{lem:I_of_Phi} For any $\varsigma\in\mathbb{R}$ we have
\begin{equation}\label{eqn:I_of_Phi_plus}
\begin{split}
  I&(\tPhi^{\varsigma}_{\eps}(\tz^{+}_{\eps}))
  -I(\tPhi^{\varsigma}_{\eps}(\tz_{\eps}))
  \\&=-\eps\int_{0}^{+\infty}\left(\X^{1}I(\tPhi^{s+\varsigma}_{0}(\tz^+_{0}))
  -\X^{1}I(\tPhi^{s+\varsigma}_{0}(\tz_{0}))\right)ds
  +O\left(\eps^{1+\rho}\right),\\
&=-\eps\int^{+\infty}_{0}\left([I,H_1](\tPhi^{s+\varsigma}_{0}(\tz^+_{0}))
  -[I,H_1](\tPhi^{s+\varsigma}_{0}(\tz_{0}))\right)ds
  +O\left(\eps^{1+\rho}\right),
\end{split}\end{equation}
and
\begin{equation}\label{eqn:I_of_Phi_minus}
\begin{split}
 I&(\tPhi^{\varsigma}_{\eps}(\tz^-_{\eps}))
  -I(\tPhi^{\varsigma}_{\eps}(\tz_{\eps}))
  \\&=+\eps\int_{-\infty}^{0}\left(\X^{1}I(\tPhi^{s+\varsigma}_{0}(\tz^-_{0}))
  -\X^{1}I(\tPhi^{s+\varsigma}_{0}(\tz_{0}))\right)ds
  +O(\eps^{1+\rho})
\\&=+\eps\int_{-\infty}^{0}\left([I,H_1](\tPhi^{s+\varsigma}_{0}(\tz^-_{0}))
  -[I,H_1](\tPhi^{s+\varsigma}_{0}(\tz_{0}))\right)ds
  +O(\eps^{1+\rho}).
  \end{split}\end{equation}
where $0<\rho<1$.

The second formula in each equation corresponds to the case when the perturbation is Hamiltonian. 
\end{lem}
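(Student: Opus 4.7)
The plan is to apply the Master Lemmas of Section~\ref{sec:master_lemmas} at a time-translated homoclinic point and exploit two key facts: first, that the action $I$ is a slow variable for the unperturbed rotator--pendulum, i.e., $\X^0 I = [I,H_0] = -\partial H_0/\partial \theta = 0$ because $H_0 = h_0(I)+h_1(y,x)$ does not depend on $\theta$; second, that the stable and unstable projections $\Omega^\pm$ commute with the flow.

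First I fix $\varsigma\in\R$ and set $\tz'_\eps := \tPhi^\varsigma_\eps(\tz_\eps)$, $\tz'_0 := \tPhi^\varsigma_0(\tz_0)$. By equivariance of the stable/unstable foliations,
\[(\tz'_\eps)^\pm = \tPhi^\varsigma_\eps(\tz^\pm_\eps),\qquad (\tz'_0)^\pm = \tPhi^\varsigma_0(\tz^\pm_0).\]
Continuous dependence on $\eps$ over the finite time $\varsigma$ (Gronwall) gives $\tz'_\eps = \tz'_0 + O(\eps)$, so $(\tz'_\eps,\tz'_0)$ satisfy the closeness relation \eqref{eqn:zeps_z0} required by the Master Lemmas. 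Thus the Master Lemmas can be applied at $\tz'_\eps$ in place of $\tz_\eps$.

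Next, I apply Master Lemma~\ref{lem:master_2} to $\bF = I$ at the point $\tz'_\eps$, obtaining
\[I((\tz'_\eps)^+) - I(\tz'_\eps) = -\mathfrak{I}^+\bigl((\X^0 + \eps \X^1) I,\, \tPhi^s_\eps,\, \tz'_\eps\bigr).\]
Since $\X^0 I = 0$, the linearity clause of Master Lemma~\ref{lem:master_1} collapses this to $-\eps\, \mathfrak{I}^+(\X^1 I,\, \tPhi^s_\eps,\, \tz'_\eps)$. The analogous argument with $\mathfrak{I}^-$ handles the unstable case and yields the sign change in \eqref{eqn:I_of_Phi_minus}.

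Finally, because the integrand $\eps\,\X^1 I$ has $C^1$-norm $O(\eps)$, I invoke Master Lemma~\ref{lem:master_4} (or, equivalently, Master Lemma~\ref{lem:master_3} with an extra factor of $\eps$) to replace the perturbed objects by unperturbed ones:
\[\eps\,\mathfrak{I}^+(\X^1 I,\, \tPhi^s_\eps,\, \tz'_\eps) = \eps\,\mathfrak{I}^+(\X^1 I,\, \tPhi^s_0,\, \tz'_0) + O(\eps^{1+\rho}).\]
Using $(\tz'_0)^+ = \tPhi^\varsigma_0(\tz^+_0)$ and the group property $\tPhi^s_0 \circ \tPhi^\varsigma_0 = \tPhi^{s+\varsigma}_0$, the resulting improper integral takes exactly the form claimed in \eqref{eqn:I_of_Phi_plus}. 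The Hamiltonian reformulation is then simply $\X^1 I = [I, H_1]$ applied inside the integral. The main technical point to verify is the applicability of Master Lemma~\ref{lem:master_3}/\ref{lem:master_4} to the shifted homoclinic point $\tz'_\eps$: one must check that the Gronwall window $[0, k\ln(1/\eps)]$ used in the proof of Lemma~\ref{lem:master_3} is unaffected by the fixed finite translation by $\varsigma$, which follows from the semigroup property together with uniform bounds on the perturbed flow on a neighborhood of the unperturbed homoclinic orbit.
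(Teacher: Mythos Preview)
Your proposal is correct and follows essentially the same route as the paper: apply Master Lemma~\ref{lem:master_2} with $\bF=I$, use $\X^0 I=0$ to extract the factor of $\eps$, then invoke Master Lemma~\ref{lem:master_3} to replace the perturbed flow by the unperturbed one at cost $O(\eps^{1+\rho})$. The only cosmetic difference is that you make the time-shift explicit by introducing $\tz'_\eps=\tPhi^\varsigma_\eps(\tz_\eps)$ and invoking equivariance of the fibers, whereas the paper applies the Master Lemmas directly at the shifted point and writes the integrand with $\tPhi^{s+\varsigma}_\eps$ from the outset; the content is identical.
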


\begin{proof}
We will only prove \eqref{eqn:I_of_Phi_plus}   as  \eqref{eqn:I_of_Phi_minus}   follows similarly.

We first apply Lemma \ref{lem:master_1} and Lemma \ref{lem:master_2} for $\bF=I$, obtaining
\begin{equation*}
\begin{split}
  I(\tPhi^{\varsigma}_{\eps}(\tz^+_{\eps}))
  -I(\tPhi^{\varsigma}_{\eps}(\tz_{\eps}))
  =-\int_{0}^{+\infty}&\left((\X^0 I+\eps \X^1 I)(\tPhi^{s+\varsigma}_{\eps}
(\tz^+_{\eps}))\right.\\
 &\left.-(\X^0 I+\eps \X^1 I)(\tPhi^{s+\varsigma}_{\eps}
(\tz_{\eps}))\right)ds.
\end{split}
\end{equation*}
Using \eqref{eqn:evolution3}, since $\X^0I=\frac{\partial{H_0}}{\partial\theta}=0$, we obtain
\begin{equation*}
\begin{split}
  I(\tPhi^{\varsigma}_{\eps}(\tz^+_{\eps}))
  -I(\tPhi^{\varsigma}_{\eps}(\tz_{\eps}))
  =-\eps\int_{0}^{+\infty}&\left(( \X^1 I)(\tPhi^{s+\varsigma}_{\eps}
(\tz^+_{\eps}))\right.\\
 &\left.-(\X^1 I)(\tPhi^{s+\varsigma}_{\eps}
(\tz_{\eps}))\right)ds.
\end{split}
\end{equation*}

Using Lemma \ref{lem:master_3}, we can replace the perturbed flow by the unperturbed flow by making  an error of order $O(\eps^\varrho)$, yielding
\begin{equation*}
\begin{split}
  I(\tPhi^{\varsigma}_{\eps}(\tz^+_{\eps}))
  -I(\tPhi^{\varsigma}_{\eps}(\tz_{\eps}))
  =-\eps\int_{0}^{+\infty}&\left(( \X^1 I)(\tPhi^{s+\varsigma}_{0}
(\tz^+_{0}))\right.\\
 &\left.-(\X^1 I)(\tPhi^{s+\varsigma}_{0}
(\tz_{0}))\right)ds\\ & +O(\eps^{1+\varrho}).
\end{split}
\end{equation*}

Finally, we note that in the pendulum-rotator system the foot-points of the stable fiber and of the unstable fiber through the same homoclinic point $\tz_0$ coincide, i.e., $\tz^-_0=\tz^+_0=\tz_0^\pm$.

In the case of the Hamiltonian perturbation,  we only need to substitute  $\X^1I=[I,H_1]$.
\end{proof}

\subsection{Change in angle by the scattering map}
{$ $}

Under the same assumptions as at the beginning of Section \ref{sec:pend_rot_change_action}, below we provide a result that describes the relation between $\tsigma_\eps$ and $\tsigma_0$ in terms of the angle coordinate $\theta$.

\label{sec:pend_rot_change_angle}
\begin{thm}\label{prop:change_in_theta}
The change in angle $\theta$ by the scattering map $\tilde{\sigma}_\eps$ is given by:
\begin{equation}
\begin{split}\label{eqn:delta_slow}
&\theta(\tz^{+}_{\eps})-\theta(\tz^{-}_{\eps})\\
=&-\eps\int_{-\infty}^{+\infty}\X^{1}\theta(\tPhi^{s}_{0}(\tz^{+}_{0}))
-\X^{1}\theta(\tPhi^{s}_{0}(\tz_{0})) ds\\
&+\eps\int_{0}^{+\infty} (\X^{1}I(\tPhi^{s}_{0}
(\tz^\pm_{0}))  -\X^{1}I(\tPhi^{s}_{0}(\tz_{0})))s ds \cdot\left(\frac{\partial^2 h_{0}}{\partial I^2}(I_0)\right) \\
 &+O(\eps^{1+\varrho})\\
=&-\eps\int_{-\infty}^{+\infty}[\theta,H_1](\tPhi^{s}_{0}(\tz^{+}_{0})
-[\theta,H_1](\tPhi^{s}_{0}(\tz_{0}) ds\\
&+\eps\int_{0}^{+\infty} ([I,H_1](\tPhi^{s}_{0}(\tz^\pm_{0}))  -[I,H_1](\tPhi^{s}_{0}(\tz_{0})))s ds \cdot\left(\frac{\partial^2 h_{0}}{\partial I^2}(I_0)\right)\\
 &+O(\eps^{1+\varrho}).
\end{split}
\end{equation}
where  $\tz^+=\tz^-=\tz^\pm$, $I_0=I(\tz^\pm)$, and $0<\varrho<1$. In the second term on the right-hand side the integral is thought of as a $1\times d$ vector,
and $\frac{\partial^2 h_{0}}{\partial I^2}(I_0)$ as a  $d\times d$ matrix. Also $[\theta,H_1]$, $[I,h_1]$ are $1\times d$ vector.

The second formula corresponds to the case when the perturbation is
Hamiltonian. The integrals on the right-hand side are evaluated with $\X^1=\X^1(\cdot;0)$ and $H_1=H_1(\cdot;0)$, respectively.

\end{thm}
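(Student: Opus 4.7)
The plan is to apply Master Lemma 2 to $\bF=\theta$ at the perturbed homoclinic point $\tz_\eps$, obtaining
\begin{equation*}
\theta(\tz^+_\eps)-\theta(\tz_\eps)=-\mathfrak{I}^+((\X^0+\eps\X^1)\theta,\tPhi^s_\eps,\tz_\eps),\quad \theta(\tz^-_\eps)-\theta(\tz_\eps)=+\mathfrak{I}^-((\X^0+\eps\X^1)\theta,\tPhi^s_\eps,\tz_\eps),
\end{equation*}
and then taking the difference. The essential new feature compared with Theorem \ref{prop:change_in_I} is that $\X^0\theta=\partial h_0/\partial I=\omega(I)$ does \emph{not} vanish, since $\theta$ is the fast variable. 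Accordingly, each $\mathfrak{I}^\pm$ splits naturally into an ``$\omega$-piece'' $A^\pm$ and an ``$\eps\X^1\theta$-piece'' $B^\pm$, which I would analyze separately.

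The $B^\pm$-pieces already carry a prefactor $\eps$, so Master Lemma 3 applied to the Lipschitz function $\X^1\theta$, together with $\tz^+_0=\tz^-_0=\tz^\pm_0$, yields after subtraction
\begin{equation*}
B^+-B^-=-\eps\int_{-\infty}^{+\infty}\bigl[\X^1\theta(\tPhi^s_0(\tz^\pm_0))-\X^1\theta(\tPhi^s_0(\tz_0))\bigr]\,ds+O(\eps^{1+\varrho}),
\end{equation*}
which matches the first integral in \eqref{eqn:delta_slow}.

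For the $A^\pm$-pieces, I would first Taylor expand
\begin{equation*}
\omega(I(\tPhi^s_\eps(\tz^\pm_\eps)))-\omega(I(\tPhi^s_\eps(\tz_\eps)))=\tfrac{\partial^2 h_0}{\partial I^2}(I_0)\cdot\bigl[I(\tPhi^s_\eps(\tz^\pm_\eps))-I(\tPhi^s_\eps(\tz_\eps))\bigr]+R_\pm(s),
\end{equation*}
where $R_\pm$ is quadratic in the action increment. Next, Lemma \ref{lem:I_of_Phi}, applied with $\varsigma=s$ and followed by the change of variable $u=s+\varsigma$, rewrites the bracket as $\mp\eps\int_s^{\pm\infty}[\X^1 I(\tPhi^u_0(\tz^\pm_0))-\X^1 I(\tPhi^u_0(\tz_0))]\,du+O(\eps^{1+\varrho})$. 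Substituting this into $A^\pm$ produces a double integral whose order of integration I would then swap via Fubini: the inner $s$-integration over $[0,u]$ (respectively $[u,0]$) produces precisely the weight $u$ (respectively $-u$), delivering the Hessian-weighted integral term in \eqref{eqn:delta_slow}. The Hamiltonian version is obtained by substituting $\X^1\theta=[\theta,H_1]$ and $\X^1 I=[I,H_1]$.

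The main obstacle is justifying the linearization uniformly in $s$ with total error $O(\eps^{1+\varrho})$. On the shadowing window $|s|\le T=O(|\ln\eps|)$, the evolution equation \eqref{eqn:evolution3} combined with the boundedness of $\X^1 I$ gives $|I(\tPhi^s_\eps(\cdot))-I_0|=O(\eps|\ln\eps|)$, so replacing $D\omega$ evaluated along the orbit by $D\omega(I_0)$ and discarding $R_\pm(s)$ contributes $O(\eps^{1+\varrho})$ after integration; outside that window the normal-hyperbolicity estimates \eqref{eqn:convergence_s}--\eqref{eqn:convergence_u} make the integrand exponentially small, so the tail contribution is of the same order. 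Combining these with the $O(\eps^\varrho)$ error from Master Lemma 3 in passing from $\tPhi^s_\eps$ to $\tPhi^s_0$ (which is then multiplied by the prefactor $\eps$) produces the final $O(\eps^{1+\varrho})$ bound.
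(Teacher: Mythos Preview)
Your proposal is correct and follows essentially the same skeleton as the paper's proof: apply Master Lemma~2 to $\bF=\theta$, split off the $\omega(I)$-piece and the $\eps\X^1\theta$-piece, handle the latter by Master Lemma~3, and for the former linearize $\omega$ at $I_0$ and feed in Lemma~\ref{lem:I_of_Phi}.

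There are two minor tactical differences worth noting. First, where you Taylor-expand $\omega$ about $I_0$ and control the quadratic remainder via the $O(\eps|\ln\eps|)$ drift of $I$ on the Gronwall window, the paper instead writes the exact integral form of the Mean Value Theorem, $\omega(I^{+})-\omega(I)=\int_0^1 D\omega\bigl(I+t(I^{+}-I)\bigr)\,dt\cdot(I^{+}-I)$, and then replaces the argument of $D\omega$ by $I_0$ using Gronwall; the two estimates are equivalent. Second, to collapse the resulting double integral to a single integral with weight $s$, the paper introduces the antiderivative $\mathscr{I}(s)$ and integrates by parts (checking via l'H\^opital that the boundary term $s\,\mathscr{I}(s)$ vanishes at infinity), whereas you swap the order of integration by Fubini over the triangular region $\{0\le s\le u\}$ (resp.\ $\{u\le s\le 0\}$). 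Your Fubini route is arguably cleaner, since absolute integrability is immediate from the exponential decay of the integrand and no boundary-term analysis is needed; the paper's integration-by-parts route makes the antiderivative structure explicit. Either way one arrives at the same weighted integral over $(-\infty,+\infty)$ (the ``$\int_0^{+\infty}$'' in the displayed statement of the theorem is a typo---the paper's own proof ends with the full line).
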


\begin{proof}
Unlike in Theorem \ref{prop:change_in_I}, where $I$ is a slow variable, $\theta$ is a fast variable, as it can be seen from \eqref{eqn:evolution4}. However,  we will  show that the differences
$$\theta\left(\tz^{+}_{\eps}\right)-\theta\left(\tz_{\eps}\right)$$
and
$$\theta\left(\tz^{-}_{\eps}\right)-\theta\left(\tz_{\eps}\right)$$
are slow quantities. Then,  taking the difference,
$$\theta\left(\tz^{+}_{\eps}\right)-\theta\left(\tz^{-}_{\eps}\right)$$
is $O(\eps)$.

We begin with $\theta\left(\tz^{+}_{\eps}\right)-\theta\left(\tz_{\eps}\right)$.
Using Lemma \ref{lem:master_1} and Lemma \ref{lem:master_2} for $\bF=\theta$ we obtain
\begin{equation}\label{eqn:differece_theta_plus}
  \begin{split}
   \theta\left(\tz^{+}_{\eps}\right)-\theta\left(\tz_{\eps}\right)
   =
   -\int^{+\infty}_{0}&\left((\X^0\theta+\eps\X^1\theta)(\tPhi^{\varsigma}_{\eps}(\tz^{+}_{\eps}))\right.\\
   &\left.  -(\X^0\theta+\eps\X^1\theta)(\tPhi^{\varsigma}_{\eps}(\tz_{\eps}))\right)d\varsigma.
   \end{split}
\end{equation}

From \eqref{eqn:evolution4} we have
\[\X^0\theta+\eps\X^1\theta=\frac{\partial h_0}{\partial I}+\eps \X^1(\theta),\]
and \eqref{eqn:differece_theta_plus} becomes
\begin{equation}\label{eqn:two_integrals}\begin{split}
   -&\int^{+\infty}_{0}\left(\frac{\partial h_0}{\partial I}(\tPhi^{\varsigma}_{\eps}
   (\tz^{+}_{\eps}))-
   \frac{\partial h_0}{\partial I}(\tPhi^{\varsigma}_{\eps}(\tz_{\eps}))\right)d\varsigma\\
   -&\eps\int^{+\infty}_{0}\left((\X^1\theta)(\tPhi^{\varsigma}_{\eps}(\tz^{+}_{\eps}))-
   (\X^1\theta)(\tPhi^{\varsigma}_{\eps}(\tz_{\eps}))\right)d\varsigma.
   \end{split}
\end{equation}

The second integral in \eqref{eqn:two_integrals} has a factor of $\eps$, so we will focus on the first integral. Recall that $\frac{\partial h_{0}}{\partial I}$ depends only on $I$.
So the first integral in \eqref{eqn:two_integrals} can be written as
\[-\int^{+\infty}_{0}\left(\frac{\partial h_{0}}{\partial I}(I(\tPhi^{\varsigma}_{\eps}(\tz^{+}_{\eps})))
-\frac{\partial h_{0}}{\partial I}(I(\tPhi^{\varsigma}_{\eps}(\tz_{\eps})))\right)d\varsigma.\]

Let us first consider the case when $h_0$ is of one-degree-of-freedom, i.e.  $I\in \mathbb{R}$. 
We can use the integral  version of the Mean Value Theorem to rewrite
the integral. Recall,
\[f(x+\Delta x)-f(x)=\Delta x\int^{1}_{0}f'(x+t\Delta x)dt\]
Using
\begin{eqnarray*}
  f &=& \frac{\partial h_{0}}{\partial I} \\
  x+\Delta x &=& I\left(\tPhi^{\varsigma}_{\eps}\left(\tz^{+}_{\eps}\right)\right)  \\
  x &=& I\left(\tPhi^{\varsigma}_{\eps}\left(\tz_{\eps}\right)\right)
\end{eqnarray*}
the integral becomes
$$-\int^{+\infty}_{0}\left(\left({I}^{\varsigma,+}_{\eps}-{I}^{\varsigma}_{\eps}\right)
\int^{1}_{0}\frac{\partial^2 h_{0}}{\partial I^2}\left({I}^{\varsigma}_{\eps}+t\left( {I}^{\varsigma, +}_{\eps}-{I}^{\varsigma}_{\eps}\right)\right) dt\right)d\varsigma$$
where we denote ${I}^{\varsigma,+}_{\eps}=I\left(\tPhi^{\varsigma}_{\eps}\left(\tz^{+}_{\eps}\right)\right)$ and
${I}^{\varsigma}_{\eps}=I\left(\tPhi^{\varsigma}_{\eps}\left(\tz_{\eps}\right)\right)$.

We use Gronwall's inequality as in Lemma \ref{lem:Gronwall_application}
to rewrite the inside integral of the second partial derivative as
\[\int^{1}_{0}\frac{\partial^2 h_{0}}{\partial I^2}\left({I}^{\varsigma}_{\eps}+t({I}^{\varsigma,+}_{\eps}-{I}^{\varsigma}_{\eps})\right) dt
=\int^{1}_{0}\frac{\partial^2 h_{0}}{\partial I^2}\left({I}^{\varsigma}_{0}+t({I}^{\varsigma,+}_{0}-{I}^{\varsigma}_{0})\right) dt+O\left(\epsilon^\varrho\right).\]

Now $ {I}^{\varsigma,+}_{0}={I}^{\varsigma}_{0}=I_0$ because $I$ is constant along the unperturbed flow, hence the above integral equals \[\frac{\partial^2 h_{0}}{\partial I^2}\left( {I}_{0}\right)+O(\epsilon^\varrho).\]

We now apply   Lemma \ref{lem:I_of_Phi} to rewrite $ {I}^{\varsigma,+}_{\eps}-\tilde{I}^{\varsigma}_{\eps}$, so the integral becomes
\begin{equation}\label{eqn:first_int}
\begin{split}
    &\eps\int^{+\infty}_{0}\int^{+\infty}_{0}(\X^{1}I(\tPhi^{s+\varsigma}_{0}
(\tz^+_{0}))
  -\X^{1}I(\tPhi^{s+\varsigma}_{0}(\tz_{0})))  d\varsigma ds\cdot \left(\frac{\partial^2 h_{0}}{\partial I^2}
  (I_0)\right)\\
 &\qquad +O(\eps^{1+\varrho})
\end{split}
\end{equation}
This integral has a factor of $\eps$, and the remaining term is $O(\eps^{1+\varrho})$, thus $\theta\left(\tz^{+}_{\eps}\right)-\theta\left(\tz_{\eps}\right)$ is a slow quantity.

Denote by $\mathscr{I}$ the antiderivative of
\[ s\mapsto(\X^{1}I(\tPhi^{s}_{0}
(\tz^\pm_{0}))  -\X^{1}I(\tPhi^{s}_{0}(\tz_{0})))\]
which approaches $0$ as $s\to\pm\infty$; we recall here that $\tz_0^+=\tz^-_0=\tz^\pm_0$.
We have
\begin{equation}\label{eqn:antiderivative}
\begin{split}\mathscr{I}(s)=&-\int_{s}^{+\infty}(\X^{1}I(\tPhi^{\upsilon}_{0}
(\tz^+_{0}))  -\X^{1}I(\tPhi^{\upsilon}_{0}(\tz_{0})))  d\upsilon\\
&=\int_{-\infty}^{s}(\X^{1}I(\tPhi^{\upsilon}_{0}
(\tz^-_{0}))  -\X^{1}I(\tPhi^{\upsilon}_{0}(\tz_{0})))d\upsilon.
\end{split}\end{equation}

Making the change of variable $\upsilon =s+\varsigma$  the integral in \eqref{eqn:first_int} becomes
\begin{equation}\label{eqn:first_int_2}
\begin{split}
    \int_{0}^{+\infty}\int_{s}^{+\infty}(\X^{1}I(\tPhi^{\upsilon}_{0}
(\tz^+_{0}))
  -\X^{1}I(\tPhi^{\upsilon}_{0}(\tz_{0})))  d\upsilon ds=
 -\int_{0}^{+\infty}\mathscr{I}(s)ds.
\end{split}
\end{equation}

Using Integration by Parts we obtain
\begin{equation}\begin{split}
-\int_{0}^{+\infty}\mathscr{I}(s)ds=&-s\mathscr{I}(s)\biggr| _{0} ^{+\infty}
+\int_{0}^{+\infty} (\X^{1}I(\tPhi^{s}_{0} (\tz^\pm_{0}))  -\X^{1}I(\tPhi^{s}_{0}(\tz_{0})))s ds\\
=&\int_{0}^{+\infty} (\X^{1}I(\tPhi^{s}_{0}
(\tz^\pm_{0}))  -\X^{1}I(\tPhi^{s}_{0}(\tz_{0})))s ds.
\end{split}\end{equation}
In the above, the quantity $s\mathscr{I}(s)$ obviously equals to $0$  at $s=0$, and
equals to $0$ when $s\to\infty$ since, by l'Hopital Rule
\[\lim_{s\to\infty}\frac{\mathscr{I}(s)}{s^{-1}}=\lim_{s\to\infty}-\frac{(\X^{1}I(\tPhi^{s}_{0}
(\tz^\pm_{0}))  -\X^{1}I(\tPhi^{s}_{0}(\tz_{0})))}{s^2}=0,\]
since $(\X^{1}I(\tPhi^{s}_{0}
(\tz^\pm_{0}))  -\X^{1}I(\tPhi^{s}_{0}(\tz_{0})))$ approaches $0$ at exponential rate.

Applying Lemma \ref{lem:master_3} to the second integral in \eqref{eqn:two_integrals}, and combining with the above we obtain
\begin{equation}\label{eqn:first_int_plus}
\begin{split}
&\theta\left(\tz^{+}_{\eps}\right)-\theta\left(\tz_{\eps}\right)\\
&\quad=+\eps \int_{0}^{+\infty} (\X^{1}I(\tPhi^{s}_{0}
(\tz^\pm_{0}))  -\X^{1}I(\tPhi^{s}_{0}(\tz_{0})))s ds \cdot\left(\frac{\partial^2 h_{0}}{\partial I^2}(I_0)\right)\\ &\qquad-\eps\int^{+\infty}_{0}\X^{1}\theta\left(\tPhi^{s}_{0}\left(\tz^{+}_{0}\right)\right)
-\X^{1}\theta\left(\tPhi^{s}_{0}\left(\tz_{0}\right)\right) ds\\
 &\qquad +O(\eps^{1+\varrho}).
\end{split}
\end{equation}

Similarly, for $\theta\left(\tz^{-}_{\eps}\right)-\theta\left(\tz_{\eps}\right)$
we obtain an expression as a sum of two integrals
\begin{equation}\label{eqn:first_int_minus}
\begin{split}
&\theta\left(\tz^{-}_{\eps}\right)-\theta\left(\tz_{\eps}\right)\\
&\quad=+\eps\int_{-\infty}^{0}\X^{1}\theta(\tPhi^{s}_{0}\left(\tz^{+}_{0}\right))
-\X^{1}\theta(\tPhi^{s}_{0}\left(\tz_{0}\right)) ds\\
&\qquad-\eps \int_{-\infty}^{0}
(\X^{1}I(\tPhi^{s}_{0} (\tz^\pm_{0}))  -\X^{1}I(\tPhi^{s}_{0}(\tz_{0})))s ds
\cdot \left(\frac{\partial^2 h_{0}}{\partial I^2}(I_0)\right)\\
 &\qquad +O(\eps^{1+\varrho}).
\end{split}
\end{equation}

In the case when $d=1$, recalling that $\tz_0^+=\tz^-_0=\tz^\pm_0$, we conclude
that
\begin{equation*}
\begin{split}
\theta(\tz^{+}_{\eps})&-\theta(\tz^{-}_{\eps})\\
=&-\eps\int_{-\infty}^{+\infty}\X^{1}\theta(\tPhi^{s}_{0}\left(\tz^{+}_{0}\right))
-\X^{1}\theta(\tPhi^{s}_{0}\left(\tz_{0}\right)) ds\\
&+\eps\int_{0}^{+\infty} (\X^{1}I(\tPhi^{s}_{0}
(\tz^\pm_{0}))  -\X^{1}I(\tPhi^{s}_{0}(\tz_{0})))s ds\cdot \left(\frac{\partial^2 h_{0}}{\partial I^2}(I_0)\right) \\
 &+O(\eps^{1+\varrho})
\end{split}
\end{equation*}

In the case where $I \in \mathbb{R}^d$, we can use the vectorial version of the Mean Value Theorem. 
For $f:\mathbb{R}^d\rightarrow \mathbb{R}$, we have
\[f(\mathbf{x}+t\Delta\mathbf{x})-f(\mathbf{x})=\left\langle\Delta\mathbf{x}, \int^{1}_{0}\nabla f(\mathbf{x}+t\Delta\mathbf{x}) dt\right\rangle,\]
where $\langle \cdot,\cdot\rangle$ denotes the inner product on $\mathbb{R}^d$.

Setting
\begin{eqnarray*}
  f &=& \frac{\partial h_{0}}{\partial I_j} \\
  \mathbf{x+\Delta\mathbf{x}} &=& {I}^{\varsigma, +}_{\eps}  \\
  \mathbf{x} &=& {I}^{\varsigma}_{\eps}
\end{eqnarray*}
and proceeding as before, the first integral that appears  in the computation of $\theta_j\left(\tz^{+}_{\eps}\right)-\theta_j\left(\tz_{\eps}\right)$
becomes
\begin{equation*}
\begin{split}&-\int_{0}^{+\infty}\left(\frac{\partial h_{0}}{\partial I_j}(I(\tPhi^{\varsigma}_{\eps}(\tz^{+}_{\eps})))
-\frac{\partial h_{0}}{\partial I_j}(I(\tPhi^{\varsigma}_{\eps}(\tz_{\eps})))\right)d\varsigma
\\
=&   +\eps\int^{+\infty}_{0}\int^{+\infty}_{0}\left\langle  \X^{1}I (\tPhi^{s+\varsigma}_{0}(\tz_{0} ) )
  -\X^{1}I(\tPhi^{s+\varsigma}_{0}(\tz_{0}))  , \frac{\partial^2 h_{0}}{\partial I\partial I_j}(I_0)))\right\rangle d\varsigma  ds\\
&+O (\eps^{1+\varrho} ),\\
=&-\eps\int_{0}^{+\infty} \left\langle  \mathscr{I}(s) , \frac{\partial^2 h_{0}}{\partial I\partial I_j}(I_0)\right\rangle    ds\\
&+O (\eps^{1+\varrho} ),
\end{split}
\end{equation*}
where we now denote by $\mathscr{I}(s)$  the vector-valued function whose component
$\mathscr{I}_i(s)$ represents the antiderivative of \[ s\mapsto(\X^{1}I_i(\tPhi^{s}_{0}
(\tz^\pm_{0}))  -\X^{1}I_i(\tPhi^{s}_{0}(\tz_{0})))\]
which approaches $0$ as $s\to\pm\infty$, for $i=1,\ldots,d$.

Using Integration by Parts the last expression can be written as
\begin{equation*}\begin{split}
&+\eps\int_{0}^{+\infty}  \left\langle  \X^1I(\Phi_0^s(\tz_0^\pm)-\X^1I(\Phi_0^s(\tz_0),
\frac{\partial^2 h_{0}}{\partial I\partial I_j}(I_0)\right\rangle   s ds\\
&+O (\eps^{1+\varrho} ).
\end{split}
\end{equation*}

The second integral that appears in the computation of $\theta_j\left(\tz^{+}_{\eps}\right)-\theta_j\left(\tz_{\eps}\right)$
has the same form as in the $1$-dimensional case $d=1$.

Thus, for the vector $\theta\left(\tz^{+}_{\eps}\right)-\theta\left(\tz_{\eps}\right)$ we obtain
\begin{equation}\label{eqn:first_int_minus}
\begin{split}
&\theta\left(\tz^{+}_{\eps}\right)-\theta\left(\tz_{\eps}\right)\\
&\quad=+\eps \int_{0}^{+\infty} (\X^{1}I(\tPhi^{s}_{0}
(\tz^\pm_{0}))  -\X^{1}I(\tPhi^{s}_{0}(\tz_{0})))s ds \cdot \left(\frac{\partial^2 h_{0}}{\partial I^2}(I_0)\right)\\ &\qquad-\eps\int^{+\infty}_{0}\left(\X^{1}\theta(\tPhi^{s}_{0}(\tz^{+}_{0}))
-\X^{1}\theta(\tPhi^{s}_{0}(\tz_{0})) \right)ds\\
 &\qquad +O(\eps^{1+\varrho}),
\end{split}
\end{equation}
where in the first expression on the right-hand side the integral is thought of as a $1\times d$ vector,
and $\frac{\partial^2 h_{0}}{\partial I^2}(I_0)$ as a  $d\times d$ matrix.

Computing $\theta_j\left(\tz^{-}_{\eps}\right)-\theta_j\left(\tz_{\eps}\right)$ in a similar fashion and combining with the above we conclude
\begin{equation*}
\begin{split}
\theta_j(\tz^{+}_{\eps})&-\theta_j(\tz^{-}_{\eps})\\
&\quad=+\eps \int_{-\infty}^{+\infty} (\X^{1}I(\tPhi^{s}_{0}
(\tz^\pm_{0}))  -\X^{1}I(\tPhi^{s}_{0}(\tz_{0})))s ds \cdot \left(\frac{\partial^2 h_{0}}{\partial I^2}(I_0)\right)\\ &\qquad-\eps\int_{-\infty}^{+\infty}\left(\X^{1}\theta(\tPhi^{s}_{0}(\tz^{+}_{0}))
-\X^{1}\theta(\tPhi^{s}_{0}(\tz_{0})) \right)ds\\
 &\qquad +O(\eps^{1+\varrho}).
\end{split}
\end{equation*}
\end{proof}

\subsection{Comparison with similar results}\label{sec:comparison_similar}
Consider the special case when the perturbation $\X^1$ is Hamiltonian and time-periodic in $t$, i.e.,
$\X^1=J\nabla H_1$ for some $H_1=H_1(z,t)$, with $t\in\mathbb{T}^1=\mathbb{R}/\mathbb{Z}$.
Then the scattering map  is exact symplectic map and depends smoothly on parameters, in particular on $\eps$, so it can be computed perturbatively.  More precisely, the  scattering map,   in terms of a local system of coordinates $(I,\theta,t)$ on $\tilde\Lambda_\eps$, can be expanded as a power of $\eps$ as follows:
\begin{equation}\label{eqn:scattering_perurbative_1}
   \tilde\sigma_\eps=\tilde\sigma_0+\eps J\nabla S\circ \tilde\sigma_0+O(\eps^2),
\end{equation}
where $\mathcal{S}_0$ is a $C^\ell$-smooth Hamiltonian function defined on some open subset of $\tilde\Lambda_\eps$.
Hence  $J\nabla\mathcal{S}_0$ represents a Hamiltonian vector field on $\tilde\Lambda_\eps$.
This formula is no longer true in the case of perturbations that are not Hamiltonian.
See \cite{DelshamsLS08a}.

In the case of the pendulum-rotator system,
since $\tilde\sigma_0 =\textrm{Id}$, we have
\begin{equation}\label{eqn:scattering_perurbative_1}
   \tilde\sigma_\eps=\textrm{Id}+\eps J\nabla S +O(\eps^{1+\varrho}),
\end{equation}
and the Hamiltonian function $S$ that generates the scattering map can be computed explicitly as follows.
Let
\begin{equation}
\label{separatrix}
\begin{split}(p^0(\tau+t\bar 1), q^0 (\tau+t\bar 1))=(p^0_1(\tau_1+t),\ldots,p^0_n(\tau_n+t), q^0_1(\tau_1+t),\ldots,q^0_n(\tau_n+t)),
\end{split}
\end{equation}
be a parametrization of the system of separatrices of the penduli, where
$\tau=(\tau_1,\ldots,\tau_n)\in\mathbb{R}^n$  and  $\bar 1=(1,\ldots,1)\in\mathbb{R}^n$.

\begin{equation}\label{eqn:L}
\begin{split}
   L(\tau,I,\theta,t)=-\int_{-\infty}^{+\infty} &\left(H_1(p^0(\tau+t\bar 1), q^0 (\tau+t\bar 1), I, \theta+\omega(I)s,t+s)\right.\\
   &\left. -H_1(0, 0, I, \theta+\omega(I)s,t+s)\right)ds
\end{split}
\end{equation}

Assume that  the map
\begin{equation}\label{eqn:tau_star}
\begin{split}
   \tau\in\R^n\mapsto L(\tau,I,\theta,t)\in\R
\end{split}
\end{equation}
has a has a non-degenerate critical point $\tau^*$, which is locally given, by the implicit
function theorem, by
\begin{equation}\label{eqn:tau_star}
\begin{split}
   \tau^*=\tau^*(I,\theta,t).
\end{split}
\end{equation}
Hence
\begin{equation}\label{eqn:L_tau_star}
\frac{\partial L}{\partial \tau}(\tau^*(I,\theta,t), I,\theta,t)=0.
\end{equation}

Then define the auxiliary function $\mathcal{L}$ by
\begin{equation}\label{eqn:mathcalL}
\begin{split}
    \mathcal{L}(I,\theta,t)=L(\tau^*(I,\theta,t), I,\theta,t).
\end{split}
\end{equation}

It is not difficult to show that $\mathcal{L}$ satisfies the following relation for all $\sigma\in\R$:
\begin{equation}\label{eqn:mathcalaL_identity}
\begin{split}
    \mathcal{L}(I,\theta,t)=\mathcal{L}(I,\theta-\omega(I)\sigma, t-\sigma).
\end{split}
\end{equation}
In particular, for $\sigma=t$, we have $\mathcal{L}(I,\theta,t)=\mathcal{L}(I,\theta-\omega(I)t,0)$. If we denote by $\mathcal{L}^*$ the function defined by
\begin{equation}\label{eqn:mathcalaLstar}
\begin{split}
    \mathcal{L}^*(I,\bar\theta)=\mathcal{L}(I,\theta-\omega(I)t, 0),\textrm { for } \bar\theta=\theta-\omega(I)t,
\end{split}
\end{equation}
then
\begin{equation}\label{eqn:mathcalaLstar}
\begin{split}
    \mathcal{L}(I,\theta,t)=\mathcal{L}^*(I,\bar\theta), \textrm { for } \bar\theta=\theta-\omega(I)t.
\end{split}
\end{equation}
This says that the function $\mathcal{L}= \mathcal{L}(I,\theta,t)$, while nominally a function of three variables,
it depends in fact on two variables only.

It turns out that the Hamiltonian function $S$ that generates the scattering map is given by
\begin{equation}\label{eqn:S} S(I,\theta,t)=-L(I,\theta,t).
\end{equation}

For $\tilde\sigma_\eps(I^-,\theta^-,t^-)=(I^+,\theta^+,t^+)$, from \eqref{eqn:scattering_perurbative_1} we obtain
\begin{eqnarray}
\label{eqn:comp_1} I^{+}-I^{-}=&\displaystyle\eps\frac{\partial \mathcal{L}}{\partial \theta}(I,\theta,t)+O(\eps^{1+\varrho}),\\
\label{eqn:comp_2}  \theta^{+}-\theta^{-}=&\displaystyle-\eps\frac{\partial \mathcal{L}}{\partial I}(I,\theta,t)+O(\eps^{1+\varrho}),\\
\label{eqn:comp_3}  t^+-t^-=&\displaystyle 0.
\end{eqnarray}

From \eqref{eqn:mathcalL} and \eqref{eqn:L_tau_star}
\begin{equation}
\label{eqn:partial_mathcal_L}\begin{split}
\frac{\partial \mathcal{L}}{\partial I}(I,\theta,t)=&\frac{\partial L}{\partial \tau}(\tau^*(I,\theta,t),I,\theta,t)\frac{\partial \tau^*}{\partial I}(I,\theta,t)+
\frac{\partial L}{\partial I}(\tau^*(I,\theta,t),I,\theta,t)\\
=&\frac{\partial L}{\partial I}(\tau^*(I,\theta,t),I,\theta,t),\\
\frac{\partial \mathcal{L}}{\partial \theta}(I,\theta,t)=&\frac{\partial L}{\partial \tau}(\tau^*(I,\theta,t),I,\theta,t)\frac{\partial \tau^*}{\partial \theta}(I,\theta,t)+
\frac{\partial L}{\partial I}(\tau^*(I,\theta,t),I,\theta,t)\\
=&\frac{\partial L}{\partial \theta}(\tau^*(I,\theta,t),I,\theta,t).
\end{split}\end{equation}

From \eqref{eqn:L}, and using the fact that $\X^1I=[I,H_1]=-\frac{\partial H_1}{\partial \theta}$,  we obtain:
\begin{equation}
\label{eqn:partial_mathcal_L_theta}\begin{split}
\frac{\partial \mathcal{L}}{\partial \theta}(I,\theta,t)=&-\int_{-\infty}^{+\infty} \left(\frac{\partial H_1}{\partial \theta} (p^0(\tau+t\bar 1), q^0 (\tau+t\bar 1), I, \theta+\omega(I)s,t+s)\right.\\
   &\left.\qquad\quad\, -\frac{\partial H_1}{\partial \theta} (0, 0, I, \theta+\omega(I)s,t+s)\right)ds\\
   =&-\int_{-\infty}^{+\infty} \left([I,H_1] (0, 0, I, \theta+\omega(I)s,t+s)\right.\\
   &\left.\qquad\quad\, -[I,H_1] (p^0(\tau+t\bar 1), q^0 (\tau+t\bar 1), I, \theta+\omega(I)s,t+s)\right)ds.
\end{split}\end{equation}

Above, note that the point $(0, 0, I, \theta+\omega(I)s,t+s)$ corresponds to $\tz_0^\pm$, and the point $(p^0(\tau+t\bar 1), q^0 (\tau+t\bar 1), I, \theta+\omega(I)s,t+s)$ corresponds to $\tz_0$ in Section \ref{sec:pend_rot_change_action}.
Thus, the formula for the change in the action by the scattering map in \eqref{eqn:comp_1} is the same as the one
given in Theorem \ref{prop:change_in_I}.

From \eqref{eqn:L}, and using that
$\X^1\theta=[\theta,H_1]=\frac{\partial H_1}{\partial I}$,
$\X^1I=[I,H_1]=-\frac{\partial H_1}{\partial \theta}$,
we obtain:
\begin{equation}
\label{eqn:partial_mathcal_L_I}\begin{split}
&\frac{\partial \mathcal{L}}{\partial I}(I,\theta,t)\\
&=-\int_{-\infty}^{+\infty} \left(\frac{\partial H_1}{\partial I} (p^0(\tau+t\bar 1), q^0 (\tau+t\bar 1), I, \theta+\omega(I)s,t+s)\right.\\
   &\qquad\qquad\, \left.-\frac{\partial H_1}{\partial I} (0, 0, I, \theta+\omega(I)s,t+s)\right)ds\\
   &\quad-\int_{-\infty}^{+\infty} \left(\frac{\partial H_1}{\partial \theta} (p^0(\tau+t\bar 1), q^0 (\tau+t\bar 1), I, \theta+\omega(I)s,t+s)\right.\\
   &\qquad\qquad\,\left. -[I,H_1] (0, 0, I, \theta+\omega(I)s,t+s)\right)(D_I\omega (I)s)ds\\
   =&\int_{-\infty}^{+\infty} \left([\theta, H_1] (0, 0, I, \theta+\omega(I)s,t+s)\right.\\
   &\qquad\qquad\, \left.-[\theta, H_1](p^0(\tau+t\bar 1), q^0 (\tau+t\bar 1), I, \theta+\omega(I)s,t+s)\right)ds\\
   &\quad-\int_{-\infty}^{+\infty} \left(I,H_1](0, 0, I, \theta+\omega(I)s,t+s) \right.\\
   &\quad\qquad\,\left. -[I,H_1](p^0(\tau+t\bar 1), q^0 (\tau+t\bar 1), I, \theta+\omega(I)s,t+s) \right)(D_I\omega (I)s)ds.
\end{split}\end{equation}

Since  $D\omega(I)=\frac{\partial^2{h_0}}{\partial I^2}(I)$, and noting that this it is independent of the variable of integration, so it can be moved outside of the integral,  the formula for the change in the angle by the scattering map in \eqref{eqn:comp_2} is the same as the one
given in Theorem \ref{prop:change_in_theta}.

\appendix
\section{Gronwall's inequality}\label{sec:gronwall}
In this section we apply  Gronwall's Inequality  to estimate the error between the solution of an unperturbed system
and the solution of the perturbed system, over a time of logarithmic order with respect to the size of the perturbation.
\begin{thm}[Gronwall's Inequality]
Given a  continuous  real valued function  $\phi\geq 0$, and   constants $\delta_0,\delta_1\geq 0$, $\delta_2>0$,
if
\begin{equation}
\phi(t)\leq \delta_0+\delta_1(t-t_0)+\delta_2\int_{t_0}^{t}\phi(s)ds
\end{equation}
then
\begin{equation}
\phi(t)\leq \left (\delta_0+\frac{\delta_1}{\delta_2} \right)e^{\delta_2(t-t_0)}-\frac{\delta_1}{\delta_2}.
\end{equation}
\end{thm}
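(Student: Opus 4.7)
The plan is to convert the integral inequality into a differential inequality and solve it by the integrating-factor technique. First I would introduce the auxiliary function
\[
\psi(t)=\int_{t_0}^{t}\phi(s)\,ds,
\]
which is nonnegative, $C^1$, and satisfies $\psi(t_0)=0$ and $\psi'(t)=\phi(t)$. The hypothesis then becomes the scalar linear differential inequality
\[
\psi'(t)-\delta_2\psi(t)\;\leq\;\delta_0+\delta_1(t-t_0).
\]

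Next I would multiply both sides by the integrating factor $e^{-\delta_2(t-t_0)}>0$ to rewrite the inequality as
\[
\frac{d}{dt}\!\left[e^{-\delta_2(t-t_0)}\psi(t)\right]\;\leq\;\bigl(\delta_0+\delta_1(t-t_0)\bigr)e^{-\delta_2(t-t_0)}.
\]
Integrating from $t_0$ to $t$, using $\psi(t_0)=0$, and evaluating the elementary integral on the right (either directly, or via integration by parts for the $\delta_1(t-t_0)$ term) yields an explicit upper bound for $e^{-\delta_2(t-t_0)}\psi(t)$, and hence for $\psi(t)$ itself. A short computation gives
\[
\delta_2\psi(t)\;\leq\;\left(\delta_0+\frac{\delta_1}{\delta_2}\right)\!\bigl(e^{\delta_2(t-t_0)}-1\bigr)-\delta_1(t-t_0).
\]

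Finally, I would feed this bound back into the original hypothesis $\phi(t)\leq \delta_0+\delta_1(t-t_0)+\delta_2\psi(t)$ to cancel the linear-in-$(t-t_0)$ terms and obtain exactly
\[
\phi(t)\;\leq\;\left(\delta_0+\frac{\delta_1}{\delta_2}\right)e^{\delta_2(t-t_0)}-\frac{\delta_1}{\delta_2}.
\]
The only mildly delicate point is the bookkeeping at the cancellation step: the constant term $-(\delta_0+\delta_1/\delta_2)$ coming from the $-1$ in $(e^{\delta_2(t-t_0)}-1)$ must combine cleanly with the $\delta_0$ already on the right-hand side of the hypothesis, and the $-\delta_1(t-t_0)$ that arose from integrating the linear inhomogeneity must cancel the explicit $\delta_1(t-t_0)$ coming from the hypothesis. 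Everything else is a routine one-line integration. No continuity beyond what is stated is needed, and the proof extends verbatim to the case $t\geq t_0$ which is the only one used in the applications (Lemma~\ref{lem:master_3}, Lemma~\ref{lem:master_4}).
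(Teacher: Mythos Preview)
Your proof is correct and is the standard integrating-factor argument for this version of Gronwall's inequality; the computation and the cancellation step both check out. Note, however, that the paper does not actually prove this theorem: it simply states it and cites \cite{verhulst2006nonlinear} as a reference, so there is no in-paper proof to compare against. One minor remark: the place where the inequality is actually invoked is Lemma~\ref{lem:Gronwall_application} (which in turn feeds into Lemmas~\ref{lem:master_3} and~\ref{lem:master_4}), not those lemmas directly.
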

For a reference, see, e.g., \cite{verhulst2006nonlinear}.

\begin{lem}\label{lem:Gronwall_application}
Consider the following differential equations:
\begin{eqnarray}
\label{eqn:eq_0}\dot{z}(t)&=&\X^0(z,t)\\
\label{eqn:eq_1}\dot{z}(t)&=&\X^0(z,t)+\eps \X^1(z,t,\eps)
\end{eqnarray}
Assume that $\X^0,\X^1$ are    uniformly Lipschitz continuous  in the variable $z$,  $C_0$ is the Lipschitz constant of $\X^0$,  and $\X^1$ is bounded with $\|\X^1\|\leq C_1$, for some $C_0,C_1>0$.
Let $z_0$ be a solution of the equation \eqref{eqn:eq_0} and $z_\eps$ be a solution of the equation \eqref{eqn:eq_1} such that
\begin{equation}\label{eqn:eq_3}
\|z_0(t_0)-z_\eps(t_0)\|<c\eps.
\end{equation}
Then, for  $0<\varrho_0<1$, $k\leq \frac{1-{\varrho_0}}{C_0}$, and $K=c+\frac{C_1}{C_0}$, we have
\begin{equation}\label{eqn:eq_4}
\|z_0(t)-z_\eps(t)\|< K\eps^{\varrho_0}, \textrm{ for } 0\leq t-t_0\leq k\ln(1/\eps).
\end{equation}
\end{lem}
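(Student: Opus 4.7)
The plan is to reduce the estimate to the standard Gronwall inequality stated just above. First I would write both solutions in their Duhamel (integral) form:
\begin{equation*}
z_0(t) = z_0(t_0) + \int_{t_0}^t \X^0(z_0(s),s)\,ds,\qquad
z_\eps(t) = z_\eps(t_0) + \int_{t_0}^t \bigl[\X^0(z_\eps(s),s) + \eps\X^1(z_\eps(s),s,\eps)\bigr]\,ds.
\end{equation*}
Subtracting, taking norms, and using the initial bound \eqref{eqn:eq_3}, the Lipschitz property of $\X^0$ with constant $C_0$, and the uniform bound $\|\X^1\|\le C_1$, the function $\phi(t):=\|z_0(t)-z_\eps(t)\|$ satisfies
\begin{equation*}
\phi(t) \le c\eps + \eps C_1(t-t_0) + C_0\int_{t_0}^t \phi(s)\,ds.
\end{equation*}

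Second, I would apply Gronwall's inequality with $\delta_0=c\eps$, $\delta_1=\eps C_1$, $\delta_2=C_0$ to conclude
\begin{equation*}
\phi(t) \le \Bigl(c\eps+\tfrac{\eps C_1}{C_0}\Bigr)e^{C_0(t-t_0)}-\tfrac{\eps C_1}{C_0} = K\eps\, e^{C_0(t-t_0)} - \tfrac{\eps C_1}{C_0},
\end{equation*}
where $K=c+C_1/C_0$ as in the statement. The second term is nonnegative and can be dropped to get an upper bound. Third, I would restrict to the time window $0\le t-t_0\le k\ln(1/\eps)$, so that
\begin{equation*}
e^{C_0(t-t_0)} \le e^{C_0 k\ln(1/\eps)} = \eps^{-C_0 k}.
\end{equation*}
The choice $k\le (1-\varrho_0)/C_0$ guarantees $C_0 k\le 1-\varrho_0$, hence $\eps^{-C_0 k}\le \eps^{\varrho_0-1}$, and therefore $\phi(t)\le K\eps\cdot\eps^{\varrho_0-1} = K\eps^{\varrho_0}$, which is \eqref{eqn:eq_4}.

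There isn't really a hard step here; this is essentially a bookkeeping lemma. The only subtlety worth flagging is the calibration of $k$: the time horizon $k\ln(1/\eps)$ is the \emph{maximal} interval on which the Gronwall exponential blow-up $e^{C_0(t-t_0)}$ stays below $\eps^{\varrho_0-1}$, which is the reason one cannot track the two solutions for order-one times but can do so for times of logarithmic order in $1/\eps$. This is precisely the regime exploited in the splitting $[0,T]\cup[T,\infty)$ with $T=k\ln(1/\eps)$ used in the proof of Master Lemma 3.
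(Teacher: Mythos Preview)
Your proof is correct and follows essentially the same route as the paper: write both solutions in integral form, subtract, bound using the Lipschitz constant of $\X^0$ and the uniform bound on $\X^1$, apply Gronwall, and then specialize to the time window $0\le t-t_0\le k\ln(1/\eps)$. The only cosmetic difference is that you (correctly) take $\delta_0=c\eps$ while the paper writes $\delta_0=c$, which is a typo there; the subsequent estimates agree.
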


\begin{proof}
For $z_0$ and $z_\eps$ solutions of \eqref{eqn:eq_0} and \eqref{eqn:eq_1}, respectively, we have
\begin{eqnarray}
\label{eqn:eq_5}  z_0(t)&=&z_0(t_0)+\int_{t_0}^{t}\X^0(z_0(s),s)ds,\\
\label{eqn:eq_6}  z_\eps(t)&=&z_\eps(t_0)+\int_{t_0}^{t}\X^0(z_\eps(s),s)ds+\eps\int_{t_0}^{t_1}\X^1(z_\eps(s),s)ds.
\end{eqnarray}
Subtracting,  we obtain
\begin{equation}\label{eqn:eq_7}\begin{split}
\|z_\eps(t)-z_0(t)\|\leq \|z_\eps(t_0)-z_0(t_0)\|&+\int_{t_0}^{t}\|\X^0(z_\eps(s),s)-\X^0(z_0(s),s)\|ds\\&
+\eps\int_{t_0}^{t}\|\X^1(z_\eps(s),s)\|ds.
\end{split}\end{equation}
Using \eqref{eqn:eq_3} for the first term on the right-hand  side, the Lipschitz condition on $X^0$ for the second, and the boundedness of $\X^1$ for the third we obtain:
\begin{equation}\label{eqn:eq_8}\begin{split}
\|z_\eps(t)-z_0(t)\|\leq c\eps&+C_0\int_{t_0}^{t}\|z_\eps(s)-z_0(s)\|ds\\&
+\eps C_1(t-t_0).
\end{split}\end{equation}
Applying the Gronwall inequality for $\delta_0=c$, $\delta_1=\eps  C_1$, and $\delta_2=C_0$, and recalling that $K=c+\frac{C_1}{C_0}$ we obtain
\begin{equation}\label{eqn:eq_9}\begin{split}
\|z_\eps(t)-z_0(t)\|&\leq \eps \left (c+\frac{C_1}{C_0}\right)e^{C_0(t-t_0)}-\eps\frac{C_1}{C_0}\\
&\leq \eps  K e^{C_0(t-t_0)}.
\end{split}\end{equation}
If we let $0\leq t-t_0\leq k\ln(1/\eps)$ we obtain
\begin{equation}\label{eqn:eq_10}\begin{split}
\|z_\eps(t)-z_0(t)\|&\leq \eps \left (c+\frac{C_1}{C_0}\right) e^{C_0(t-t_0)}-\eps\frac{C_1}{C_0}\\
&\leq \eps  K e^{C_0k\ln(1/\eps)}\\&=\eps K \left(\frac{1}{\eps}\right)^{C_0 k}.
\end{split}\end{equation}
Since  $k\leq \frac{1-\varrho}{C_0}$ we conclude
\begin{equation}\label{eqn:eq_11}\begin{split}
\|z_\eps(t)-z_0(t)\|&\leq\eps K \left(\frac{1}{\eps}\right)^{1-\varrho}=K\eps^{\varrho}.
\end{split}\end{equation}
\end{proof}

We note that, with the above argument, for a time of logarithmic order with respect to the size of the perturbation,
we can only obtain an error of order $O(\eps^{\varrho})$ with $0<\rho <1$, but we cannot obtain an error of order $O(\eps)$.

\bibliographystyle{alpha}
\bibliography{diffusion}

\end{document}